\newtheorem{theorem}{Theorem}[section]
\newtheorem{remark}{Remark}
\newtheorem{definition}{Definition}[section]
\newtheorem{proposition}[theorem]{Proposition}
\newtheorem{assumption}{Assumption}
\newtheorem{statement}{Statement}
\newcommand{\norm}[1]{\| #1 \|}
\newcommand{\quotes}[1]{``#1''}
\title{Accelerated Distributed Primal-Dual Dynamics using Adaptive Synchronization}
\author{
  P. A. Bansode\thanks{P. A. Bansode is with department of Instrumentation Engineering, Ramrao Adik Institute of Technology, Mumbai, 400706 India.~{\tt\small prashant.bansode@rait.ac.in}} \\
   \And
 K. C. Kosaraju\thanks{K. C. Kosaraju is with faculty of Science and Engineering, University of Groningen, AG Groningen, 9747 The Netherlands.}  \\
 \And
 S. R. Wagh\thanks{S. R. Wagh is with department of Electrical Engineering, Veermata Jijabai Technological Institute, Mumbai, 400019 India.} \\
 \And
 R. Pasumarthy\thanks{R. Pasumarthy is with department of Electrical Engineering, Indian Institute of Technology Madras, Madras, 600036 India.} \\
 \And
 N. M. Singh\thanks{N. M. Singh is with department of Electrical Engineering, Veermata Jijabai Technological Institute, Mumbai, 400019 India.} 
}
\begin{document}
\maketitle

\begin{abstract}
This paper proposes an adaptive primal-dual dynamics for distributed optimization in multi-agent systems. The proposed dynamics incorporates an adaptive synchronization law that reinforces the interconnection strength between the coupled agents. By strengthening the synchronization between the primal variables of the coupled agents, the given law accelerates the convergence of the proposed dynamics to the saddle-point solution. The resulting dynamics is represented as a feedback-interconnected networked system that proves to be passive. The passivity properties of the proposed dynamics are exploited along with the LaSalle’s invariance principle for hybrid systems, to establish asymptotic convergence and stability of the saddle-point solution. Further, the primal dynamics is analyzed for the rate of convergence and stronger convergence bounds are established, it is proved that the primal dynamics achieve accelerated convergence under the adaptive synchronization. The robustness of the proposed dynamics is quantified using $L_2$-gain analysis and the correlation between the rate of convergence and robustness of the proposed dynamics is presented. The effectiveness of the proposed dynamics is demonstrated by applying it to solve distributed least squares and distributed support vector machines problems.
\end{abstract}

\keywords{Distributed optimization \and Networked control system \and Primal-dual dynamics \and Adaptive synchronization}

\section{Introduction}
Distributed optimization techniques have been the subject of substantial research for many years. Their applications include wireless sensor networks \cite{rabbat2004distributed,johansson2008distributed,bertrand2011consensus}, power networks \cite{yi2015distributed}, large scale support vector machines \cite{forero2010consensus,stolpe2016distributed} etc. An exhaustive survey of these techniques can be found in \cite{nedic2018distributed}. Mainly, distributed optimization techniques are categorized as either decomposition based distributed optimization (see, \cite{palomar2006tutorial} and references therein) or consensus-based distributed optimization. The consensus based distributed optimization techniques have been significantly explored lately \cite{nedic2010constrained,forero2010consensus,yi2015distributed,stolpe2016distributed,lin2017distributed,sakurama2017distributed,wang2019distributed}, which is the prime subject of this paper.

Many algorithms have been proposed to solve consensus-based distributed optimization problems arising in networked systems, such as the seminal work on distributed sub-gradient methods\cite{nedic2009distributed}, distributed primal-dual dynamical algorithms\cite{yi2015distributed}, distributed gradient descent algorithms \cite{qu2016accelerated,lin2017distributed} etc. Out of these, the distributed primal-dual dynamics based algorithms deserve special attention because of their rich systems and control theoretic properties \cite{feijer2010stability,wang2011control,cherukuri2016asymptotic,simpson2016input,kosaraju2018stability} and ability to obtain simultaneously both primal as well as dual optimal solutions. The seminal work on the primal-dual dynamics or the saddle point dynamics dates back to late $1950$s \cite{kose1956solutions,arrow1958studies}. While their application for solving optimization problems over a network first appeared in \cite{feijer2010stability} with the focus on asymptotic convergence and stability of these algorithms. This framework is later extended to distributed optimization over a network of communicating nodes in \cite{droge2014continuous,yi2015distributed}. The primal-dual dynamics in \cite{droge2014continuous} combine the decomposition and the consensus-based methods to propose proportional-integral distributed optimization for equality constrained optimization problems and achieves a globally asymptotically stable saddle-point solution. The primal-dual gradient-based algorithm proposed in \cite{yi2015distributed} achieves asymptotic convergence for a consensus-based distributed optimization problem with local inequality constraints and implements the algorithm for load-sharing control in power networks. The notion of asymptotic convergence and stability of the (distributed) primal-dual dynamics for distributed optimization has been well established.

From the perspectives of online optimization, it is necessary to certify the distributed optimization algorithms on the basis of their rate of convergence as well as stability. The asymptotic convergence of the primal-dual dynamics implies that the trajectories will converge to the saddle-point solution as $t \rightarrow \infty$ which is not sufficient as a notion when the algorithm solves the distributed optimization problem online. Lately, the algorithms such as distributed gradient (sub-gradient) methods have been widely re-studied with the objective of improvement in the rate of convergence, see \cite{qu2016accelerated,7912404,nedic2017achieving,lin2017distributed,nedic2018improved}. However, the distributed primal-dual dynamics are not yet explored with the same objective which could limit their application to large-scale distributed optimization problems. While the existing methods on improving the rate convergence of the primal-dual dynamics rely upon increasing the convexity of the objective function by using quadratic penalty terms (augmented Lagrangian techniques)\cite{simpson2016input}, their usage for solving distributed optimization problems will destroy the distributed structure of the objective function. Thus, increasing convexity by using quadratic penalties may not pose as a suitable way of improving the rate of convergence of the distributed primal-dual dynamics. As an alternative route to this could be to exploit the graph-Laplacian properties of the underlying network and use adaptive coupling gains between the nodes to improve the convergence results. Addressing this issue, the present work primarily contributes to the accelerated convergence of the distributed primal-dual dynamics.

\subsection{Relevant literature and contributions}
The work proposed in this paper is in the same spirit with the recent articles \cite{yi2015distributed,kosaraju2018stability}. In \cite{yi2015distributed}, the framework of primal-dual dynamics for network utility maximization \cite{feijer2009krasovskii} which uses Krasovskii type Lyapunov function to derive asymptotic convergence, is extended for distributed optimization with application to load sharing control in power systems. Our contribution significantly differs from \cite{yi2015distributed} in the sense that the proposed dynamics is first analyzed using passivity tools of dynamical systems which then lead to its asymptotic stability when combined with the LaSalle's invariance principle of hybrid systems \cite{lygeros2003dynamical}. The advantage of passivity-based stability analysis is that the proposed dynamics can be realized as a negative feedback interconnection of the primal and the dual subsystems. This also facilitates to understand the interaction between the primal and the dual dynamical subsystems through their inputs and outputs. Thus each subsystem also enjoys $L_2$ stability properties of feedback connected dynamical systems. This feature later comes to the aid of robustness analysis of the proposed dynamics using $L_2$-gains. The fundamental results on passivity-based stability analysis of the primal-dual dynamics are established in \cite{kosaraju2018stability}. Our work, in a way, extends these results for the consensus-based distributed optimization problems. However, the primal dynamical subsystem derived in this paper does not use Brayton-Moser framework \cite{brayton1964theory} to arrive at the optimal solution. 

The central theme of the paper, that is the adaptively coupled primal-dual dynamics is derived by integrating the consensus protocol in the distributed primal-dual dynamics with the adaptive coupling laws motivated from the results in \cite{shafi2015adaptive}. In \cite{shafi2015adaptive}, the adaptive synchronization technique has been proved to guarantee the synchronization between the trajectories of diffusively coupled agents of a multiagent system. This technique is essentially based upon modifying the coupling weights of the diffusively coupled agents in accordance with the synchronization error between them. Larger values of synchronization errors result in increasing the coupling weights and vice-a-versa. In this paper, it is shown that the adaptation in the coupling weights strengthens the synchronization of the primal variables of the coupled agents. With this, the proposed work establishes results on an accelerated convergence of the proposed dynamics to the saddle point solution. While the adaptive synchronization has proved to accelerate the convergence, it is shown that it affects the robustness of the proposed dynamics. By introducing exogenous inputs in the interconnected network dynamics of the primal-dual subsystems, the $L_2$-gain of the proposed dynamics is analyzed and worst case $L_2$-gain is quantified in correlation with the rate of convergence. Although it is well known that the interconnected network of passive dynamical systems is inherently robust to exogenous inputs \cite{van2000l2}, our results have quantified the $L_2$-gain margins and established a relation between these margins and the rate of convergence. 

To summarize, the proposed work envelopes the following key points:
\begin{enumerate}
	\item The proposed algorithm, designated hereafter as the adaptively synchronized distributed primal-dual dynamics (ADPDD), ensures synchronization of the network-wide primal variables to a common trajectory which is then driven to the optimal solution.
	\item The ADPDD is posed as a negative feedback interconnection of the primal dynamical subsystem and the dual dynamical subsystems. It is proved that these subsystems remain individually passive, which subsequently, ensures the passivity and the asymptotic stability of the proposed dynamics.
	\item The convergence rate of the ADPDD is established and it is proved that the ADPDD has an accelerated convergence than the distributed primal-dual dynamics (DPDD).
	\item By allowing time-scale separation between the adaptive coupling laws and the primal-dual dynamics, the adaptively coupled distributed primal dynamics is proved to have an accelerated convergence to the optimal solution than the conventional distributed primal dynamics.
	\item The $L_2$-gain analysis of the proposed dynamics against the exogenous disturbances is presented to show the correlation between the rate of convergence and the robustness of the proposed algorithm.
	\item Applicability of the proposed algorithm to solve distributed least squares, distributed support vector machines problems is discussed.
\end{enumerate}    

\subsection{Notations and Preliminaries}\label{prel}
The set $\mathbb{R}$ (respectively $\mathbb{R}_{\geq 0}$ or $\mathbb{R}_{> 0}$) is the set of real (respectively non-negative or positive) numbers. $I_n$ is the $n \times n$ identity matrix. $\mathbf{0}$ is a zero vector of appropriate dimensions.
For a square matrix $A \in \mathbb{R}^{n \times n}$, $\mathrm{eig}(A)=\{\lambda_1(A),\lambda_2(A),\ldots,\lambda_n(A)\} \in \mathbb{R}$ represents eigenvalues of $A$ in an ascending order. The smallest eigenvalue of $A$ is given by $\lambda_1(A)$ and the second smallest eigenvalue is given by $\lambda_2(A)$. If  $B \in \mathbb{R}^{m\times n}$ and $C \in \mathbb{R}^{p \times q}$ are real matrices, then $B \otimes C \in \mathbb{R}^{mp \times nq}$ is a block matrix that defines the Kronecker product of $B$ and $C$. 

The interaction topology in a multi-agent system is represented using an undirected graph $\mathcal{G}=(\mathcal{N}, \mathcal{E})$ with $\mathcal{N}= \{1, 2, \ldots, n\}$ as the set of agents and $\mathcal{E} \subseteq \mathcal{N} \times \mathcal{N}$ as the set of edges. The neighbor set of the $i^{th}$ agent is $\mathcal{N}_i=\{q\in \mathcal{N}| (q,i) \in \mathcal{E}\}$, where $i \in \mathcal{N}$. The number of agents $n$ is the cardinality of $\mathcal{G}$. Let $D \in \mathbb{R}^{n \times n}$ be the degree matrix of $\mathcal{G}$ and $A \in \mathbb{R}^{n \times n}$ be the adjacency matrix of $\mathcal{G}$, with elements $a_{iq}=a_{qi}>0,\forall (i,q) \in \mathcal{E}$, then $L=D-A$ is the Laplacian matrix of $\mathcal{G}$. By definition, $L\in \mathbb{R}^{n \times n}$ is a symmetric positive semidefinite matrix that encodes the connectivity of the agents and their interaction topology in $\mathcal{G}$.

If $f: \mathbb{R}^n \rightarrow \mathbb{R}$ is continuously differentiable in $x \in \mathbb{R}^n$, then $\nabla_x f:\mathbb{R}^n \rightarrow \mathbb{R}^n$ is the gradient of $f$ with respect to $x$. If $f$ is twice continuously differentiable and strictly convex in $x$ then $\mathbb{H}=\nabla^2_xf \in \mathbb{R}^{n \times n}_{>0}$ is a symmetric positive definite matrix of second-order partial derivatives of $f$ with respect to $x$.

Consider the following dynamical system 
\begin{align}
\dot{x}=F(x,u),y=G(x,u),\label{dynamic}
\end{align} where state $x \in \mathbb{R}^n$, input $u \in \mathbb{R}^m$, and output $y \in \mathbb{R}^m$, with $F,G$ (of appropriate dimensions) sufficiently smooth and satisfying $F(0)=G(0)=0$. 
\begin{definition}[\cite{khalil1996noninear}]
	The system \eqref{dynamic} is said to be passive if there exists a positive semidefinite storage function (Lyapunov function) $V: \mathbb{R}^n \rightarrow \mathbb{R}$, continuously differentiable in $x$ such that $\dot{V}\leq u^Ty$.\end{definition}
For scalars $x,y$, $[x]^+_y:=x$ if $y>0$ or $x>0$, and $[x]^+_y:=0$ otherwise. 

The remainder of the paper is mainly divided into two sections. Section \ref{pf} discusses the main results of the paper and Section \ref{sims} presents examples to validate the proposed work. Subsection \ref{pf} is divided as follows: Section \ref{sec2a} describes the consensus-based distributed optimization problem. In Subsection \ref{adsy} the adaptive synchronization technique is elaborated. Subsection \ref{adpdd} formulates the adaptive distributed primal-dual dynamical algorithm to solve distributed optimization problem proposed in Subsection \ref{sec2a}. Subsections \ref{passive_pd} and \ref{gb} present passivity and stability analysis of the proposed dynamics. In Subsection \ref{fasterc} the convergence bounds of the proposed algorithm are obtained and the proof for an accelerated convergence of the same is provided. Subsection \ref{l2stab} provides $L_2$-gain analysis of the proposed dynamics and establishes a correlation between both robustness and rate of convergence of the same. Section \ref{sims} presents the application of the proposed dynamics to the distributed least squares and the distributed support vector machines problems. Some numerical examples of academic interests are also discussed. Section \ref{concl} concludes the paper.

\section{Problem Formulation and main results}\label{pf}
\subsection{Distributed Optimization}\label{sec2a}
Consider the following distributed optimization problem
\begin{align} 
\begin{aligned}
\min_{x \in \mathbb{R}^{ln}}&~f(x) = \sum_{i=1}^{n}f_i(x_i)\\
\mathrm{subject~to}&~x_{ik}=x_{qk},~\forall^l_{k=1},\forall i,q \in \mathcal{N},\\
&~g_j(x_{ik})\leq 0,~\forall^{m^{ik}_g}_{j = 1},\forall^l_{k=1},\forall i \in \mathcal{N},
\end{aligned}\label{diopt}
\end{align}
where $x_i = [x_{i1},\ldots,x_{il}]^T\in \mathbb{R}^l$ and $x = [x^T_1,\ldots,x^T_n]^T \in \mathbb{R}^{ln}$. It is assumed that the functions $f_i:\mathbb{R}^l \rightarrow \mathbb{R}$ is twice differentiable and strongly convex, and $g_j:\mathbb{R} \rightarrow \mathbb{R}$ is convex. The optimization problem \eqref{diopt} can be decomposed into $n$ subproblems wherein each subproblem minimizes the cost $f_i(x_i)$ subject to the consensus constraint $x_{ik}=x_{qk}$ and inequality constraints $g_j(x_{ik})\leq 0$.
The problem \eqref{diopt} can not be fully decoupled into a set of $n$ subproblems because of the consensus constraints, but it can be addressed as a network-based multiagent optimization problem using graph theory as a tool. Let an undirected and connected graph $\mathcal{G}(\mathcal{N},\mathcal{E})$ describe the communication topology of the underlying network, where $\mathcal{N}$ denotes the set of agents or subproblems, and $\mathcal{E}$ denotes the set of communication links. Each agent minimizes a local cost function $f_i(x_i)$ subject to the consensus constraints $x_{ik}=x_{qk},\forall^l_{k=1},\forall q \in \mathcal{N}_i$ and the local inequality constraints $g_j(x_{ik})\leq 0,\forall^l_{k=1}$.  The global consensus corresponds to the optimal solution of \eqref{diopt}, when $x^{*}_1=x^{*}_2=\ldots=x^*_n=x^*$. The index $m^{ik}_g$ is the number of inequality constraints associated with the scalar $x_{ik}$. 

The strong duality of \eqref{diopt} is subject to the convexity of $f$ and the constraint satisfaction given by the Slater's condition (see, \cite{boyd2004convex}), which is as follows:
Assuming that there exists an $x \in \mathrm{relint}\mathcal{D}$ such that $g_j(x_{ik})< 0,x_{ik}=x_{qk},\forall^l_{k=1},\forall q \in \mathcal{N}_i,\forall^n_{i=1}$, then $x$ is strictly feasible, where $\mathcal{D}$ is the domain of \eqref{diopt} defined as $\mathcal{D}=\mathrm{dom}f$.
The convexity of $f$ strongly imply the uniqueness of its optimal solution $x^*$.
\begin{assumption}\label{asss} $f$ is $\mu-$strongly convex  and $\kappa$-smooth, i.e., for all $x_1,x_2\in \mathbb{R}^{ln}$,
	\begin{align}
	\mu \norm{x_1-x_2}^2\leq \langle \nabla f(x_1)-\nabla f(x_2),x_1-x_2\rangle \leq \kappa\norm{x_1-x_2}^2.\label{lipsch}
	\end{align}
\end{assumption}
The Lagrangian function ${\mathcal{L}}$ of the problem \eqref{diopt} is given by:
\begin{align}
\begin{aligned} 
{\mathcal{L}}(x,\alpha,\theta)&=f(x)+\alpha^T(L\otimes I_{l})x+\sum_{i=1}^{n}\sum_{k=1}^{l}\sum_{j=1}^{m^{ik}_g}\theta^{ik}_j g_j(x_{ik}), 
\end{aligned}\label{dlag1}
\end{align}
where $\alpha_{ik}\in \mathbb{R}$ is a Lagrange multiplier associated with the consensus constraint $x_{ik}=x_{qk}$ and $\theta^{ik}_j\in \mathbb{R}_+=\{\theta^{ik}_j \in \mathbb{R}| \theta^{ik}_j\geq 0,~\forall^l_{k=1},\forall^{m^{ik}_g}_{j=1}, \forall i \in \mathcal{N}\}$ is a Lagrange multiplier associated with the inequality constraint $g_j(x_{ik})\leq 0$. The vector notations of the respective Lagrange multipliers are $\theta\in \mathbb{R}^{m^{ik}_gln}_+$ and $\alpha \in \mathbb{R}^{ln}$.



\begin{remark}
	Assuming that the Slater's condition is satisfied and a strong duality holds, the saddle-point $(x^*,\alpha^*,\theta^*)$ satisfies the Karush-Kuhn-Tucker (KKT) conditions derived the Lagrangian \eqref{dlag1}, as follows:
	\begin{align}
	&\nabla_{x^*_{ik}}f_i(x^*_{ik})+\sum_{q\in \mathcal{N}_i} a_{iq}(\alpha^*_{ik}-\alpha^*_{qk})\nonumber\\
	&+\sum_{j=1}^{m^{ik}_g}(\theta^{ik}_j)^*\nabla_{x^*_{ik}}g_j(x^*_{ik})=0,\forall^{l}_{k=1},\forall i \in \mathcal{N},\nonumber\\
	&g_j(x^*_{ik})\leq 0,(\theta^{ik}_j)^*\geq 0,\forall^{l}_{k=1},\forall^{m^{ik}_g}_{j=1},\forall i \in \mathcal{N},\nonumber\\
	&(\theta^{ik}_j)^\ast g_j(x^*_{ik})=0,\forall^{l}_{k=1},\forall^{m^{ik}_g}_{j=1},\forall i \in \mathcal{N},\nonumber\\ 
	&x^*_{ik}=x^*_{qk},~\forall^l_{k=1},\forall (i,q) \in \mathcal{N}.
	\label{dkkt}
	\end{align}
\end{remark}

In order to ensure the global consensus of the states $x_i,\forall i \in \mathcal{N}$, the Lagrangian function defined in \eqref{dlag1} is augmented with the term $x^T(L\otimes I_{l})x$. The augmented Lagrangian function is defined below:
\begin{align}
\begin{aligned} 
\bar{\mathcal{L}}(x,\alpha,\theta)={\mathcal{L}}(x,\alpha,\theta)+x^T(L\otimes I_{l})x.
\end{aligned}\label{dlag}
\end{align}
\begin{remark}
	Note that augmenting the Lagrangian \eqref{dlag1} with $x^T(L\otimes I_{l})x$ does not affect its convexity-concavity properties. This owes to the fact that $x^T(L\otimes I_{l})x$ is a positive semidefinite function of the primal variable $x$. Thus the saddle-point satisfying \eqref{dkkt} also satisfies the following KKT conditions for the Lagrangian \eqref{dlag}:
	\begin{align}
	&\nabla_{x^*_{ik}}f_i(x^*_{ik})+\sum_{q\in \mathcal{N}_i} a_{iq}(x^*_{ik}-x^*_{qk})+\sum_{q\in \mathcal{N}_i} a_{iq}(\alpha^*_{ik}-\alpha^*_{qk})\nonumber\\
	&+\sum_{j=1}^{m^{ik}_g}(\theta^{ik}_j)^*\nabla_{x^*_{ik}}g_j(x^*_{ik})=0,\forall^{l}_{k=1},\forall i \in \mathcal{N},\nonumber\\
	&g_j(x^*_{ik})\leq 0,(\theta^{ik}_j)^*\geq 0,\forall^{l}_{k=1},\forall^{m^{ik}_g}_{j=1},\forall i \in \mathcal{N},\nonumber\\
	&(\theta^{ik}_j)^\ast g_j(x^*_{ik})=0,\forall^{l}_{k=1},\forall^{m^{ik}_g}_{j=1},\forall i \in \mathcal{N},\nonumber\\ 
	&x^*_{ik}=x^*_{qk},~\forall^l_{k=1},\forall (i,q) \in \mathcal{N}.
	\label{dkkt1}
	\end{align}\label{kkta}
\end{remark}

Using the augmented Lagrangian \eqref{dlag}, the primal-dual dynamics is derived as follows:
\begin{align} 
\begin{aligned}
\dot{x}_{ik}&=-\nabla_{x_{ik}}\bar{\mathcal{L}}(x,\alpha,\theta), \dot{\alpha}_{ik}=\nabla_{\alpha_{ik}} \bar{\mathcal{L}}(x,\alpha,\theta),\\
\dot{\theta}^{ik}_j&=[\nabla_{\theta^{ik}_j}\bar{\mathcal{L}}(x,\alpha,\theta)]^+_{\theta^{ik}_j},~\forall^l_{k=1};\forall^{m^{ik}_g}_{j=1}; \forall i \in \mathcal{N}.
\end{aligned}\label{pddy}
\end{align}
With the primal-dual dynamics derived as given in \eqref{pddy}, the following subsection develops the ADPDD.
\subsection{Adaptively Synchronized Distributed Primal-dual dynamics}\label{adpddwow}
The following subsection presents the adaptive synchronization mechanism which is later integrated with the dynamics defined in \eqref{pddy} to arrive at ADPDD.
\subsubsection{Adaptive synchronization}\label{adsy}
The adaptive synchronization mechanism has been widely used in multi-agent systems to guarantee synchronization between the agents with respect to their state variables \cite{shafi2015adaptive,delellis2009novel}, which is explained subsequently.

The primal variables associated with each agent evolve according to
\begin{align}
\dot{x}_{ik}=-\nabla_{x_{ik}}\bar{\mathcal{L}}(x,\alpha,\theta) \label{pd1}
\end{align} 
as described in \eqref{pddy}.
By performing gradient descent on \eqref{dlag}, the primal dynamics \eqref{pd1} can be further derived as:
\begin{align}
\dot{x}_{ik}&=-\nabla_{x_{ik}}f(x)-\sum_{q\in \mathcal{N}_i} a_{iq}(x_{ik}-x_{qk})\nonumber\\
&~~~-\sum_{q\in \mathcal{N}_i} a_{iq}(\alpha_{ik}-\alpha_{qk})-\sum_{j=1}^{m^{ik}_g}\theta^{ik}_j\nabla_{x_{ik}}g_j(x_{ik}).\label{pd2}
\end{align}
Let $u_{x_{ik}} \in \mathbb{R}$ corresponds to the following term in \eqref{pd2}:
\begin{align} 
\begin{aligned}
u_{x_{ik}}=-\sum_{q\in \mathcal{N}_i}a_{iq}(x_{ik}-x_{qk}),\forall q \in \mathcal{N}_i,
\end{aligned}\label{conse}
\end{align} where the interconnection strength or the coupling weight $a_{iq}$ belongs to the adjacency matrix $A$ such that
\begin{align} 
a_{iq}=a_{qi}=\begin{cases}
\mathrm{a~positive~scalar,}~~\mathrm{for}~(q,i) \in \mathcal{E},\\
0,~~\mathrm{for}~(q,i) \notin \mathcal{E}.
\end{cases}\nonumber
\end{align}
The equation \eqref{conse} is regarded widely as the consensus protocol or the consensus law\cite{shafi2015adaptive,mesbahi2010graph}.
Define further $u_{x_{i}} \in \mathbb{R}^l$, the consensus protocol \eqref{conse} can be modified to accommodate $x_i\in \mathbb{R}^l$ as given below:
\begin{align} 
\begin{aligned}
u_{x_{i}}=-\sum_{q\in \mathcal{N}_i}a_{iq}(x_{i}-x_{q}),\forall q \in \mathcal{N}_i,
\end{aligned}\label{conse1}
\end{align} Similarly, 
\begin{align}
u_x = -(L \otimes I_l)x \label{ux}
\end{align} is a compact form representation of \eqref{conse1}.

If $i$ and $q$ are neighbors in $\mathcal{G}$ with $e_{iq} = x_i-x_q$ defined as the local synchronization error, then the coupling weight can be represented as a function of $e_{iq}$, i.e. $\dot{a}_{iq}=h_i(e_{iq})$, where $h_i:\mathbb{R}^l \rightarrow \mathbb{R}$  monotonically increases in $e_{iq}$. It yields a stronger synchronization between the primal variables of the coupling agents which motivates to incorporate adaptive synchronization to address the convergence rate of the distributed primal-dual dynamics. In line with this, the following coupling weight update rule is proposed:
\begin{align} 
\dot{a}_{iq}=d_{iq}(e^T_{iq}e_{iq}+\dot{e}^T_{iq}\dot{e}_{iq}),\label{ajl}
\end{align} where $d_{iq}=d_{qi}>0$ is the adaptive gain constant. 
\begin{remark}
	Represent \eqref{ajl} in the form $\dot{a}_{iq}=h_i(e_{iq},\dot{e}_{iq})$, throughout the rest of the paper it is assumed that the real valued function $h_i:\mathbb{R}^{ln}\rightarrow \mathbb{R}$ is Lipschitz continuous.
\end{remark}

The dynamics \eqref{ajl} addresses two questions, viz. how far from each other the local primary variables are and how fast they can be synchronized to a common trajectory. The quadratic appearance of $e_{iq}$ and $\dot{e}_{iq}$ in \eqref{ajl} ensures that it is monotonically increasing in $\mathbb{R}$.
\begin{figure}[t]
	\centering
	\includegraphics[width=4in]{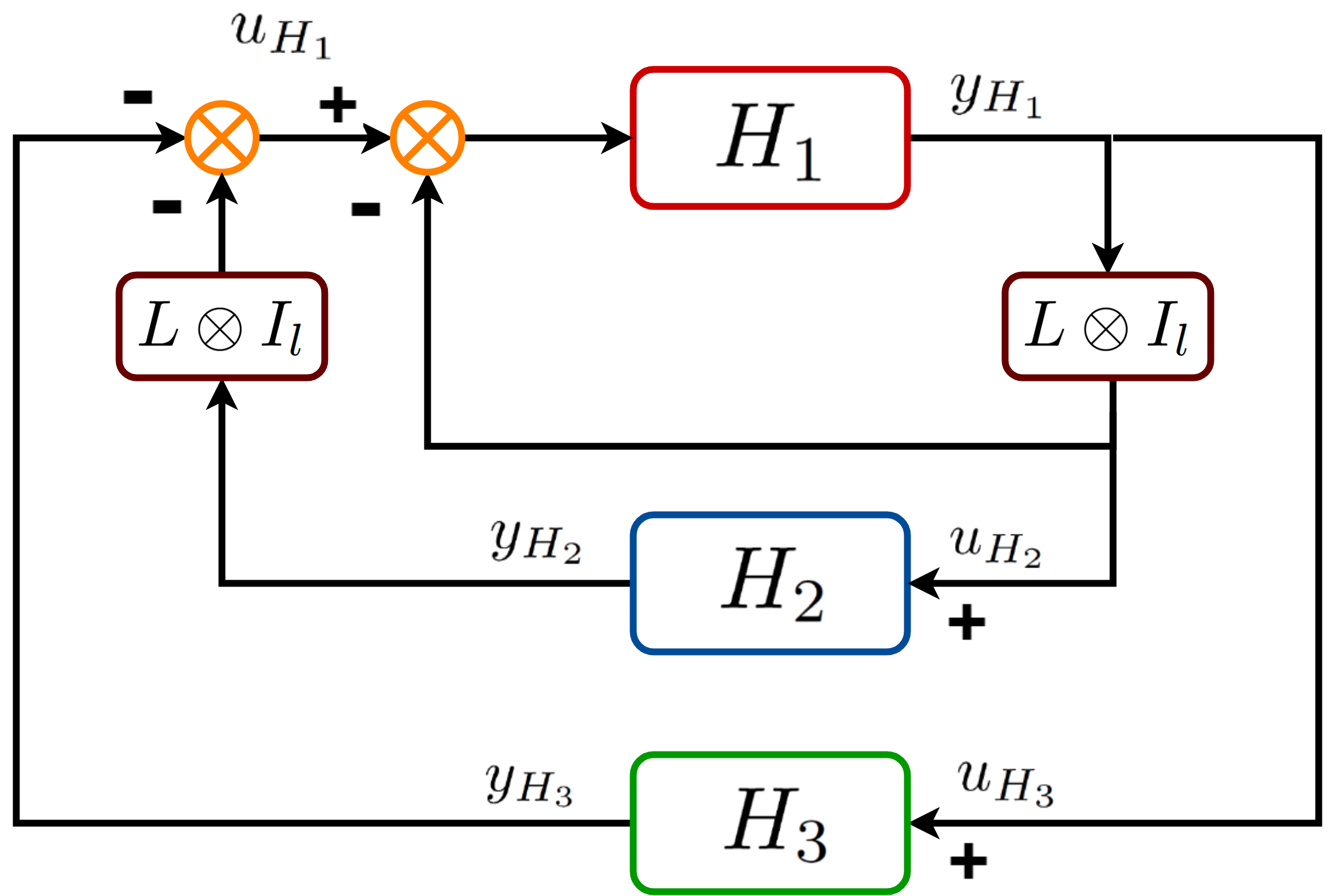}
	\caption{Interconnected networked dynamics of $H_1, H_2$ and $H_3$.}
	\label{dpd}
\end{figure}

\subsubsection{Integrating the adaptive coupling law \eqref{ajl} with the primal-dual dynamics \eqref{pddy}}\label{adpdd}
By integrating the adaptive coupling law \eqref{ajl} with the PDD \eqref{pddy} and partitioning the resulting dynamics into three interconnected subsystems i.e., $H_1$ (primal partition), $H_2$ (consensus dual partition), and $H_3$ (inequality dual partition) as shown in Fig. \ref{dpd}, yields:
\begin{align} 
H_1&:\begin{cases}
\dot{x}&=-\nabla_xf(x)+u_x+u_{H_1},\\
\dot{a}_{iq}&=d_{iq}(e^T_{iq}e_{iq}+\dot{e}^T_{iq}\dot{e}_{iq}),\forall i \in \mathcal{N},\forall q \in \mathcal{N}_i,\\
y_{H_1}&=x.
\end{cases}\label{h1d}\\
H_2&:\begin{cases}
\dot{\alpha}&=u_{H_2},\\
y_{H_2}&=\alpha.
\end{cases}\label{h2d}
\end{align}

The system $H_3$ represents the $\theta^{ik}_j$ dynamics in the stacked vector form with $u_{H_3}$ and $y_{H_3}$ as its input and output respectively, as given below:
\begin{align} 
H_3&:\begin{cases}
\dot{\theta}^{ik}_j&=[g_j(x_{ik})]^+_{\theta^{ik}_j},~\forall^l_{k=1};\forall^{m^{ik}_g}_{j=1}; \forall i \in N,\\
y_{H_3} &=\sum_{j=1}^{m^{ik}_g}\theta^{ik}_j\nabla_{x_{ik}}g_j(x_{ik}),\forall^l_{k=1};\forall^{m^{ik}_g}_{j=1}; \\&~~~\forall i \in N,
\end{cases}\label{h3}
\end{align}
where $y_{H_1},y_{H_2},y_{H_3} \in \mathbb{R}^{ln}$ and $u_{H_1}=-(L \otimes I_l)y_{H_2}-y_{H_3},u_{H_2}=(L\otimes I_{l})y_{H_1}$, and $u_{H_3}=y_{H_1}$. 

The ADPDD \eqref{h1d}-\eqref{h3} has been characterized as the feedback interconnected networked system as shown in the Fig. \ref{dpd}. Each agent in the underlying network is diffusively coupled with its neighboring agents by virtue of the communication topology that defines the interaction between such agents on the graph $\mathcal{G}(\mathcal{N},\mathcal{E})$. It can be noted that the network representation in Fig. \ref{dpd} is independent of the graph parameters such as communication topology, number of agents, and interaction links. Irrespective of such parameters, if the graph $\mathcal{G}(\mathcal{N},\mathcal{E})$ is connected, one can arrive at the stability results of the underlying network by only verifying its passivity properties. Towards this end, the following subsection first motivates the passivity analysis of the network shown in Fig. \ref{dpd} which further leads to its closed loop stability and robustness analysis. 

\subsection{Passivity based stability analysis of ADPDD}\label{passive_pd}
This section begins with passivity analysis of the subsystems $H_1$, $H_2$, $H_3$ and their feedback interconnection shown in Fig. \ref{dpd} and then moves towards the stability and robustness analysis of the said feedback interconnection. The Krasovskii type storage function has been defined for each subsystem (see, \cite{feijer2010stability}) which has led to a new passivity property with differentiation at both ports\cite[Proposition 2]{kosaraju2017control}. The intuition behind this proposition is to define the Krasovskii type storage function $V(x)$ for the dynamical system defined in \eqref{dynamic}, such that $\dot{V}\leq \dot{u}^T\dot{y}$,
where $\dot{u}$ and $\dot{y}$ are considered as port variables. This inequality shows that the map from the port input $\dot{u}$ to the port output $\dot{y}$ is passive. Motivated by this result, subsequently it is shown that the ADPDD is a passive system.
\subsubsection{$H_1$ is passive}
\begin{proposition}
	Assuming that the graph $G$ is connected and $f$ is strictly convex in $x$, if there exists $x_{eq}\in \mathbb{R}^{ln}$ that satisfies \eqref{dkkt}, then the subsystem $H_1$ is passive with port variables $(\dot{{y}}_{H_1},\dot{u}_{H_1})$. 
\end{proposition}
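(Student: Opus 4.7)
The plan is to establish passivity of $H_1$ with the differentiated port variables $(\dot{u}_{H_1},\dot{y}_{H_1})$ by means of a Krasovskii-type storage function, in the spirit of the passivity-with-differentiation-at-both-ports construction cited from \cite{kosaraju2017control}. Since $y_{H_1}=x$, we have $\dot{y}_{H_1}=\dot{x}$, so the target inequality is $\dot{V}\leq \dot{u}_{H_1}^{T}\dot{x}$. As a first candidate I would take $V=\tfrac{1}{2}\dot{x}^{T}\dot{x}$, which is positive semidefinite and continuously differentiable. If the time-varying coupling turns out to obstruct the bound (see below), I would augment it with an adaptation-dependent term of the form $W(a)=\tfrac{1}{2}\sum_{(i,q)\in\mathcal{E}}\phi(a_{iq})/d_{iq}$ chosen so that $\dot{W}$ cancels the residual cross term.

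The first key step is to differentiate the primal flow $\dot{x}=-\nabla f(x)-(L\otimes I_{l})x+u_{H_1}$, remembering that $L$ is time-varying through the adaptive law \eqref{ajl}. This produces
\begin{equation*}
\ddot{x}=-\nabla^{2}f(x)\dot{x}-(L\otimes I_{l})\dot{x}-(\dot{L}\otimes I_{l})x+\dot{u}_{H_1},
\end{equation*}
and hence
\begin{equation*}
\dot{V}=-\dot{x}^{T}\nabla^{2}f(x)\dot{x}-\dot{x}^{T}(L\otimes I_{l})\dot{x}-\dot{x}^{T}(\dot{L}\otimes I_{l})x+\dot{x}^{T}\dot{u}_{H_1}.
\end{equation*}
The first term is $\leq 0$ by strict convexity of $f$, so $\nabla^{2}f(x)\succ 0$; the second term is $\leq 0$ because the instantaneous Laplacian $L$ is symmetric positive semidefinite. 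The port product $\dot{x}^{T}\dot{u}_{H_1}$ is exactly $\dot{y}_{H_1}^{T}\dot{u}_{H_1}$, so if the cross term behaves well we are done.

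The main obstacle is the cross term $-\dot{x}^{T}(\dot{L}\otimes I_{l})x$, which is not sign-definite a priori. To tame it I would exploit the Laplacian structure $\dot{L}_{ii}=\sum_{q\neq i}\dot{a}_{iq}$, $\dot{L}_{iq}=-\dot{a}_{iq}$ with $\dot{a}_{iq}=\dot{a}_{qi}\geq 0$, and rewrite
\begin{equation*}
\dot{x}^{T}(\dot{L}\otimes I_{l})x=\tfrac{1}{2}\sum_{(i,q)}\dot{a}_{iq}\,\dot{e}_{iq}^{T}e_{iq}.
\end{equation*}
Substituting the adaptive law $\dot{a}_{iq}=d_{iq}(e_{iq}^{T}e_{iq}+\dot{e}_{iq}^{T}\dot{e}_{iq})$ and applying Young's inequality $2|\dot{e}_{iq}^{T}e_{iq}|\leq \|e_{iq}\|^{2}+\|\dot{e}_{iq}\|^{2}$ shows that this cross term is controlled by a quantity proportional to $\dot{a}_{iq}^{2}/d_{iq}$. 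The deciding step is then to verify that this residual is absorbed by the two negative-definite terms (using the strong-convexity modulus $\mu$ and the Laplacian spectrum from Assumption~\ref{asss}, connectedness of $\mathcal{G}$, and the equilibrium condition $x_{eq}$ from \eqref{dkkt} to anchor the synchronization errors), or that it is cancelled by the derivative of an auxiliary storage $W(a)$ added to $V$. In either case one obtains $\dot{V}\leq \dot{u}_{H_1}^{T}\dot{y}_{H_1}$, which establishes passivity of $H_1$ with the claimed port variables. The hardest part is the clean bookkeeping of the $\dot{L}$-induced cross term; everything else is routine once the storage function is fixed.
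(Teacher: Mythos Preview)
Your plan coincides with the paper's: the storage used is exactly $V_{H_1}=\tfrac12\dot x^{T}\dot x+W$, with the auxiliary piece taken in the concrete adaptive-control form $W=\tfrac12\sum_{i,q}d_{iq}^{-1}(a_{iq}-a^{*})^{2}$ for a free design constant $a^{*}$ (see \eqref{a*}--\eqref{vh1}). Differentiating $W$ along \eqref{ajl} and invoking the Laplacian identity for $\sum_{i,q}a_{iq}\|e_{iq}\|^{2}$ yields $\dot W=(1-a^{*})\bigl[\dot x^{T}(L\otimes I_{l})\dot x+x^{T}(L\otimes I_{l})x\bigr]$; choosing $a^{*}>1$ renders both contributions nonpositive and, combined with $-\dot x^{T}\mathbb{H}\dot x$, gives the output-strict passivity bound \eqref{ddh1}. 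So it is the $W$-augmentation route that the paper takes, not Young-inequality absorption, and the $a^{*}$-shift is precisely the cancellation device you left unspecified. One remark in your favour: your bookkeeping of the $-\dot x^{T}(\dot L\otimes I_{l})x$ cross term is more careful than the paper's own derivation, which silently omits that term in arriving at \eqref{ddh1}; it is not cancelled by $\dot W$ as written there.
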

\begin{proof}
	Let $\tilde{a}_{iq} = a_{iq}-a^*_{iq}$ with $a^*_{iq}>0$ defined as follows:
	\begin{align} 
	a^*_{iq}=\begin{cases}
	a^*~~\textrm{if $i$ and $q$ are neighbors in $\mathcal{G}$},\\
	0~~\textrm{if $i$ and $q$ are not neighbors in $\mathcal{G}$},
	\end{cases}\label{a*}
	\end{align}
	where $a^*$ is a parameter to be selected.
	
	Consider the following storage function for the update law \eqref{ajl} \cite{shafi2015adaptive}.
	\begin{align} 
	W=\frac{1}{2}\sum_{i=1}^{p}\sum_{q=1}^{p}\frac{1}{d_{iq}}\tilde{a}^2_{iq}. \label{va} 
	\end{align}
	Differentiating \eqref{va} with respect to time yields the following:
	\begin{align} 
	\dot{W}=\sum_{i=1}^{p}\sum_{q=1}^{p}\tilde{a}_{iq}(e^T_{iq}e_{iq}+\dot{e}^T_{iq}\dot{e}_{iq}).\label{dva}
	\end{align}
	Acknowledging the graph symmetry and substituting for $e_{iq}=x_i-x_q$, \eqref{dva} modifies to
	\begin{align} 
	\begin{aligned}
	\dot{W} &= \dot{x}^T(L \otimes I_{l})\dot{x}-a^*\dot{x}^T(L \otimes I_{l})\dot{x}\\
	&~~~+x^T(L \otimes I_{l})x-a^*x^T(L \otimes I_{l})x,\\
	&= (1-a^*)\dot{x}^T(L \otimes I_{l})\dot{x}+(1-a^*)x^T(L \otimes I_{l})x.
	\end{aligned}\label{dva1}
	\end{align}
	
	Now, consider the following storage function for $H_1$, which is a sum of Krasovskii-type storage function of $x$ and \eqref{va}:
	\begin{align} 
	\begin{aligned}
	V_{H_1}(x) &= \frac{1}{2}\dot{x}^T\dot{x}+W.
	\end{aligned}\label{vh1}
	\end{align}
	
	Differentiating \eqref{vh1} with respect to time and using \eqref{dva1} yields,
	\begin{align}
	\begin{aligned}  
	\dot{V}_{H_1}(x)
	&=-\dot{x}^T\mathbb{H}\dot{x}-\dot{x}^T(L \otimes I_{l})\dot{x}+(1-a^*)\dot{x}^T(L \otimes I_{l})\dot{x}\\&~~~+(1-a^*){x}^T(L \otimes I_{l}){x}+\dot{x}^T\dot{u}_{H_1},\\
	&=-\dot{x}^T\mathbb{H}\dot{x}-a^*\dot{x}^T(L \otimes I_{l})\dot{x}+(1-a^*){x}^T(L \otimes I_{l}){x}\\&~~~+\dot{x}^T\dot{u}_{H_1},\\
	&\leq -\big(\lambda_{min}({\mathbb{H}})+a^*\lambda_{2}(L \otimes I_{l})\big)\norm{\dot{y}_{H_1}}^2\\&~~~+(1-a^*)\lambda_{2}(L \otimes I_{l})\norm{y_{H_1}}^2+\dot{{y}}_{H_1}^T\dot{u}_{H_1}.
	\end{aligned}\label{ddh1}
	\end{align}
	
	Notice that $y_{H_1}=x$ and choosing $a^*>1$ makes the term $(1-a^*)\lambda_{2}(L \otimes I_{l})\norm{y_{H_1}}^2$ in \eqref{ddh1} negative definite. Since $\lambda_{min}({\mathbb{H}})+a^*\lambda_{2}(L \otimes I_{l})>0$ for a non-negative value of $a^*$, the inequality \eqref{ddh1} implies that the subsystem $H_1$ is output strictly passive (\quotes{OSP}\cite{van2000l2}) with respect to the port variables $\dot{u}_{H_1}$ and $\dot{y}_{H_1}$.
\end{proof}

\subsubsection{$H_2$ is passive}
\begin{proposition}
	Assuming that the graph $G$ is connected and $f$ is strictly convex in $x$, if there exists $\alpha_{eq}\in \mathbb{R}^{ln}$ satisfying \eqref{dkkt}, then the subsystem $H_2$ is passive with port variables $(\dot{y}_{H_2},\dot{u}_{H_2})$.
\end{proposition}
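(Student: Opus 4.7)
The plan is to mimic the Krasovskii-type storage function construction used for $H_1$, but exploit the fact that $H_2$ is just an integrator and therefore considerably simpler. Since the dynamics are $\dot{\alpha} = u_{H_2}$ with output $y_{H_2} = \alpha$, we automatically have $\dot{y}_{H_2} = \dot{\alpha} = u_{H_2}$, and differentiating once more gives $\ddot{\alpha} = \dot{u}_{H_2}$. So the port-differentiated input and the time-derivative of the state are directly related.

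First, I would propose the storage function
\begin{align}
V_{H_2} = \tfrac{1}{2}\dot{\alpha}^T \dot{\alpha},
\end{align}
which is positive semidefinite and continuously differentiable in $\alpha$ (interpreted in the Krasovskii sense used in Proposition~1). Then I would differentiate along trajectories:
\begin{align}
\dot{V}_{H_2} = \dot{\alpha}^T \ddot{\alpha} = \dot{\alpha}^T \dot{u}_{H_2} = \dot{y}_{H_2}^T \dot{u}_{H_2}.
\end{align}
This immediately yields $\dot{V}_{H_2} \leq \dot{u}_{H_2}^T \dot{y}_{H_2}$ (with equality), which matches the passivity definition stated in Section~\ref{prel} applied to the port-differentiated variables, in the same spirit as Proposition~1. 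Hence $H_2$ is passive (in fact lossless) with respect to $(\dot{u}_{H_2}, \dot{y}_{H_2})$.

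Finally, I would note that the existence of $\alpha_{eq} \in \mathbb{R}^{ln}$ satisfying the KKT conditions \eqref{dkkt} (guaranteed by strong duality under Slater's condition and strict convexity of $f$, with connectedness of $\mathcal{G}$) ensures that the Krasovskii-type shifted storage function is well posed and vanishes at the equilibrium, so the passivity statement is meaningful for the subsequent closed-loop interconnection analysis.

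There is essentially no hard step here: because $H_2$ is a pure integrator, the Krasovskii storage function is lossless by construction, and the only subtlety is checking that the port variables used in the differentiated passivity inequality are consistent with those used for $H_1$ in Proposition~1, so that the feedback interconnection in Fig.~\ref{dpd} can later be closed passively. That compatibility is what I would emphasize at the end of the proof.
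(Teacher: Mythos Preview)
Your proof is correct and matches the paper's approach almost verbatim: the paper also takes $V_{H_2}=\tfrac{1}{2}\dot\alpha^T\dot\alpha$, differentiates, and obtains $\dot V_{H_2}=\dot\alpha^T(L\otimes I_l)\dot x=\dot y_{H_2}^T\dot u_{H_2}$, which is exactly your computation with $u_{H_2}=(L\otimes I_l)y_{H_1}$ substituted in. The only cosmetic difference is that the paper writes out the intermediate step $\dot\alpha^T(L\otimes I_l)\dot x$ explicitly rather than going through $\ddot\alpha=\dot u_{H_2}$.
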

\begin{proof}
	Consider a Krasovskii-type storage function for $H_2$ as given below:
	\begin{align} 
	{V}_{H_2}(\alpha) = \frac{1}{2}\dot{\alpha}^T\dot{\alpha}.\label{vh2}
	\end{align}
	Differentiating \eqref{vh2} with respect to time yields,
	\begin{eqnarray} 
	\dot{V}_{H_2}(\alpha)
	&=& \dot{\alpha}^T(L \otimes I_{l})\dot{x}, \nonumber\\
	&=&\dot{y}_{H_2}^T\dot{u}_{H_2}. \label{ddh21}
	\end{eqnarray}
	\eqref{ddh21} yields the following inequality,
	\begin{align} 
	V_{H_2}(\alpha(\tau))-V_{H_2}(\alpha(0)) \leq \int_{0}^{\tau} \dot{y}_{H_2}^T\dot{u}_{H_2}dt.\label{dpassh2}
	\end{align}
	Hence, the subsystem $H_2$ is passive with respect to port variables $\dot{u}_{H_2}$ and $\dot{y}_{H_2}$. 
\end{proof}
\subsubsection{$H_3$ is passive}
In the following, $H_3$ is modeled as a switched dynamical system.

The dynamics in \eqref{h3} becomes discontinuous when $\theta^{ik}_{j}=0$ and $ g_{j}(x_{ik})<0$. The value of $g_j(x_{ik})^+$ switches from $g_j(x_{ik})$ to $0$. To further clarify that, \eqref{h3} is reformulated below as given in Kose \cite{kose1956solutions}.
\begin{align} 
\dot{\theta}^{ik}_{j} =\begin{cases}
g_j(x_{ik}),~\mathrm{if}~\theta^{ik}_{j}>0~\mathrm{or}~ g_j(x_{ik})>0,\\
0. 
\end{cases}\label{proj}
\end{align}
From \eqref{proj}, the projection is seen to be active for the second case. Let $\mathcal{I}_i= \{1,\ldots,lm^{ik}_g\}$ and $\sigma_{i}:[0,\infty) \rightarrow \mathcal{I}_i,\forall k=1,\ldots,l; j \in \mathcal{I}_i$ be an arbitrary switching signal. Then
\begin{align} 
\sigma_{i}(t)=\{j|\theta^{ik}_{j}=0,g_j(x_{ik})\leq 0,\forall k;\forall j \in \mathcal{I}_i\},\label{switch}
\end{align} represents the switching time instances when there is an active projection.
Considering \eqref{switch}, the inequality constraint dynamics given in \eqref{h3} takes the form of a switched system:
\begin{align} 
\dot{\theta}^{ik}_{j}=\begin{cases}
g_j(x_{ik}),\forall k;j \notin \sigma_{i}(t),\\
0,\forall k;j \in \sigma_{i}(t),  
\end{cases}\label{thetadot}
\end{align}
where $\sigma_{i}(t) \subset \sigma(t), \forall^{n}_{i=1}$. Let $V_{H_3}$ be the Lyapunov function associated with $H_3$. It is defined as given below:
\begin{align} 
{V}_{H_3}(\theta) = \frac{1}{2}\sum_{i=1}^{n}\sum_{k=1}^{l}\sum_{j \notin \sigma_{i}(t)}^{}(\dot{\theta}^{ik}_j)^2.\label{vh3}
\end{align}
\begin{proposition}
	The subsystem $H_3$ is passive with port input $\dot{u}_{H_3}$, and port output $\dot{y}_{H_3}$ for each pair of switching time instances $(\tau^+_{\sigma_i},\tau^-_{\sigma_i})$ corresponding to \eqref{thetadot} where $\tau^-_{{\sigma_i}}<\tau^+_{{\sigma_i}}$ such that $\sigma_i(\tau^+_{\sigma_i})=\sigma(\tau^-_{\sigma_i})=\sigma_i \in \mathcal{I}_i$ and $\sigma_i(\tau') \neq \sigma_i$ for $\tau^-_{{\sigma_i}}<\tau<\tau^+_{\sigma_i}$.
\end{proposition}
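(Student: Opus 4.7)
The plan is to fix an open dwell interval $(\tau^-_{\sigma_i},\tau^+_{\sigma_i})$ on which the active projection set is constantly $\sigma_i$ (so \eqref{thetadot} is smooth), establish the differential passivity inequality $\dot V_{H_3}\leq \dot y_{H_3}^T\dot u_{H_3}$ pointwise on the interval, and then integrate it to obtain the storage-function bound that certifies passivity of $H_3$ with port variables $(\dot u_{H_3},\dot y_{H_3})$. The storage function is the Krasovskii-type $V_{H_3}$ in \eqref{vh3}, and the input to pair against is $\dot u_{H_3}=\dot y_{H_1}=\dot x$.

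On such an interval, the switching definition \eqref{switch} forces $\theta^{ik}_j\equiv 0$ and $\dot{\theta}^{ik}_j\equiv 0$ whenever $j\in\sigma_i$, so these indices contribute to neither $V_{H_3}$ nor $y_{H_3}$, and the surviving indices obey $\dot{\theta}^{ik}_j=g_j(x_{ik})$, whence $\ddot{\theta}^{ik}_j=g'_j(x_{ik})\,\dot x_{ik}$. Differentiating \eqref{vh3} immediately gives
\begin{equation*}
\dot V_{H_3}=\sum_{i,k}\sum_{j\notin\sigma_i(t)} g_j(x_{ik})\,g'_j(x_{ik})\,\dot x_{ik}.
\end{equation*}
A direct chain-rule differentiation of the output in \eqref{h3}, again restricted by $\theta^{ik}_j\equiv 0$ on $\sigma_i$, and paired with $\dot u_{H_3}=\dot x$, then yields
\begin{equation*}
\dot y_{H_3}^T\dot u_{H_3}=\dot V_{H_3}+\sum_{i,k}\sum_{j\notin\sigma_i(t)}\theta^{ik}_j\,g''_j(x_{ik})\,(\dot x_{ik})^2.
\end{equation*}
Convexity of each $g_j$ gives $g''_j\geq 0$, and the projection in \eqref{proj} preserves $\theta^{ik}_j\geq 0$, so the residual sum is non-negative. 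Hence $\dot V_{H_3}\leq\dot y_{H_3}^T\dot u_{H_3}$ throughout the interval, and integrating from $\tau^-_{\sigma_i}$ to $\tau^+_{\sigma_i}$ delivers exactly the supply-rate inequality asserted by the proposition.

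The main obstacle is the boundary bookkeeping at the switching instants rather than the interior computation. When a new index enters $\sigma_i$ at $\tau^-_{\sigma_i}$, the triggering condition $g_j(x_{ik})\leq 0$ with $\theta^{ik}_j=0$ collapses $\dot{\theta}^{ik}_j$ to $0$, which simply drops a $(\dot{\theta}^{ik}_j)^2$ summand out of $V_{H_3}$; when an index leaves $\sigma_i$ at $\tau^+_{\sigma_i}$, the exit is triggered by $g_j(x_{ik})$ crossing zero from below, so the newly reinstated $(\dot{\theta}^{ik}_j)^2$ is born at zero. Verifying carefully that neither event can create an upward jump in $V_{H_3}$, together with the continuity of $\theta^{ik}_j$ (and hence of $y_{H_3}$) across these events, is what lets the pointwise inequality integrate to the passivity statement without a correction term and is the one technical point that needs to be argued, rather than asserted, in the full proof.
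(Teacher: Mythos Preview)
Your proposal is correct and follows essentially the same route as the paper: differentiate the Krasovskii storage \eqref{vh3} on a fixed dwell interval, bound $\dot V_{H_3}$ by $\dot y_{H_3}^T\dot u_{H_3}$, integrate, and then check that the switching events at $\tau^\pm_{\sigma_i}$ do not cause $V_{H_3}$ to jump upward (your two boundary cases are exactly the paper's cases 1 and 2). The one place you are actually sharper than the paper is the interior inequality: the paper passes from $\sum_{i,k}\sum_j \dot\theta^{ik}_j\,\nabla g_j(x_{ik})\,\dot x_{ik}$ directly to $\dot y_{H_3}^T\dot u_{H_3}$ without comment, whereas you correctly identify the residual $\sum_{i,k}\sum_{j\notin\sigma_i}\theta^{ik}_j\,g''_j(x_{ik})(\dot x_{ik})^2$ and invoke convexity of $g_j$ together with $\theta^{ik}_j\ge 0$ to sign it. That is the step the paper's proof leaves implicit, so your version is the more complete one.
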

\begin{proof}
	Differentiating \eqref{vh3} with respect to time yields, 
	\begin{align} 
	\dot{V}_{H_3}(\theta)&= \sum_{i=1}^{n}\sum_{k=1}^{l}\sum_{j\notin \sigma_{i}(t)}^{}\dot{\theta}^{ik}_j\ddot{\theta}^{ik}_j,\nonumber \\
	&=\sum_{i=1}^{n}\sum_{k=1}^{l}\sum_{j\notin \sigma_{i}(t)}^{}\dot{\theta}^{ik}_j\nabla g_j(x_{ik})\dot{x}_{ik}, \nonumber \\&\leq \sum_{i=1}^{n}\sum_{k=1}^{l}\sum_{j=1}^{m^{ik}_g}\dot{\theta}^{ik}_j\nabla g_j(x_{ik})\dot{x}_{ik},\nonumber\\
	&\leq \sum_{i=1}^{n}\sum_{k=1}^{l}\dot{x}_{ik}\sum_{j=1}^{m^{ik}_g}\dot{\theta}^{ik}_j\nabla g_j(x_{ik}). \label{dh31-1}
	\end{align}
	Using $u_{H_3}$ and $y_{H_3}$ from \eqref{h3} in \eqref{dh31-1},
	\begin{align} 
	\dot{V}_{H_3}(\theta)\leq\dot{y}_{H_3}^T\dot{u}_{H_3}.\label{dh31}
	\end{align}
	Thus,
	\begin{align} 
	\sum_{i=1}^{n}\sum_{k=1}^{l}\sum_{j \notin \sigma_i(t)}^{}V_k(\theta^{ik}_{j}(\tau^+_{\sigma_i}))-V_k(\theta^{ik}_{j}(\tau^-_{\sigma_i}))\leq\int_{\tau^-_{\sigma_i}}^{\tau^+_{\sigma_i}}\dot{y}_{H_3}^T\dot{u}_{H_3}dt. \label{passivity}
	\end{align} 
	\eqref{passivity} ensures that the switched system \eqref{thetadot} represents a finite family of passive systems. However, it must be ensured that the Lyapunov function $V_{H_3}$ does not increase during the switching events. In line with this, the following two cases have been considered:
	\begin{enumerate}
		\item It may happen for some $x_{ik}$ in \eqref{thetadot}, that the function $g_j(x_{ik})$ goes from negative to positive through $0$. This will cause the Lyapunov function to change from $V_k(\theta^{ik}_{j}(\tau^-_{\sigma_i}))$ to $V_k(\theta^{ik}_{j}(\tau^+_{\sigma_i}))$. If that happens, the Lagrangian multiplier $\theta^{ik}_{j}>0$ will add a new term to $V_k(\theta^{ik}_{j}(\tau_{\sigma_i}))$. Since, $V_k(\theta^{ik}_{j}(\tau_{\sigma_i}))$ is continuous in time, \eqref{passivity} holds for $\tau >\tau^-_{\sigma_i}$ as well as $\tau<\tau^+_{\sigma_i}$. Hence, 
		$V_k(\theta^{ik}_{j}(\tau^+_{\sigma_i}))=V_j(\theta^{ik}_{j}(\tau^-_{\sigma_i}))$.
		
		\item In this case the projection of $k^{th}$ constraint for a given $j$ becomes active, i.e., $\theta^{ik}_{j}$ reaches to $0$ from a positive value for the $k^{th}$ constraint of the $i^{th}$ machine. Hence, the corresponding $k^{th}$ term of the Lyapunov function $V_k(\theta^{ik}_{j})$ will disappear. In turn, the following inequality will be satisfied.	$V_k(\theta^{ik}_{j}(\tau^+_{\sigma_i}))<V_k(\theta^{ik}_{j}(\tau^-_{\sigma_i}))$.
	\end{enumerate}
	Hence, in both the cases, the Lyapunov function $V_k(\theta^{ik}_{j}(\tau))$ will be non-increasing. 
\end{proof}
\subsection{Stability analysis of the feedback interconnection shown in Fig \ref{dpd}.}\label{gb}	
\begin{proposition}\label{prof3.4}
	Let $a^*>1$ then the interconnected network dynamics \eqref{h1d}-\eqref{h3} is passive from the input $u_x$ to the output $y_{H_1}$.
\end{proposition}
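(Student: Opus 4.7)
The plan is to prove passivity of the overall interconnection by using the natural composite storage function $V = V_{H_1} + V_{H_2} + V_{H_3}$ and showing that, thanks to the symmetric feedback structure imposed by the Laplacian, the cross supply terms between subsystems cancel and only the external supply $\dot y_{H_1}^T \dot u_x$ survives (up to a non-positive dissipation term).

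First, I would recompute $\dot V_{H_1}$ while treating $u_x$ as a genuinely external input rather than the substitution $-(L\otimes I_l)x$ used in the preceding passivity proof of $H_1$. Starting from $\ddot x = -\mathbb{H}\dot x + \dot u_x + \dot u_{H_1}$ and combining with $\dot W = (1-a^*)\dot x^T (L\otimes I_l) \dot x + (1-a^*) x^T (L\otimes I_l) x$ from \eqref{dva1}, I would observe that for $a^* > 1$ both Laplacian-based terms of $\dot W$ are non-positive, yielding
\begin{equation*}
\dot V_{H_1} \;\leq\; -\lambda_{min}(\mathbb{H})\,\|\dot y_{H_1}\|^2 + \dot y_{H_1}^T\dot u_{H_1} + \dot y_{H_1}^T \dot u_x.
\end{equation*}
Then I would cite the already-established inequalities $\dot V_{H_2} \leq \dot y_{H_2}^T \dot u_{H_2}$ from \eqref{ddh21} and $\dot V_{H_3} \leq \dot y_{H_3}^T \dot u_{H_3}$ from \eqref{dh31} (understood in the between-switching sense handled earlier), and sum the three inequalities.

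Next I would substitute the interconnection relations $\dot u_{H_1} = -(L\otimes I_l)\dot y_{H_2} - \dot y_{H_3}$, $\dot u_{H_2} = (L\otimes I_l)\dot y_{H_1}$, $\dot u_{H_3} = \dot y_{H_1}$ and verify that
\begin{equation*}
\dot y_{H_1}^T \dot u_{H_1} + \dot y_{H_2}^T \dot u_{H_2} + \dot y_{H_3}^T \dot u_{H_3} \;=\; -\dot y_{H_1}^T(L\otimes I_l)\dot y_{H_2} + \dot y_{H_2}^T(L\otimes I_l)\dot y_{H_1} - \dot y_{H_1}^T\dot y_{H_3} + \dot y_{H_3}^T\dot y_{H_1} \;=\; 0,
\end{equation*}
by the symmetry of $L$ (and hence of $L\otimes I_l$). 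This cancellation is the core of the argument and leaves $\dot V \leq -\lambda_{min}(\mathbb{H})\,\|\dot y_{H_1}\|^2 + \dot y_{H_1}^T \dot u_x$, which is precisely the dissipation inequality certifying passivity (indeed output strict passivity) of the interconnected network from $u_x$ to $y_{H_1}$, in the Krasovskii sense with port variables $(\dot u_x, \dot y_{H_1})$.

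I expect the main obstacle to be bookkeeping rather than anything deep: in particular, the term $-\dot x^T(L\otimes I_l)\dot x$ that appeared inside the earlier $H_1$ passivity inequality resulted from silently substituting $u_x = -(L\otimes I_l)x$, and I must resist double-counting it when $u_x$ is released as an external input. A secondary subtlety is that the $H_3$ passivity inequality only holds between switching instants of $\sigma_i(t)$; however, since $V_{H_3}$ is proved to be non-increasing across switching events (the two cases in Proposition~3.3), the composite $V$ remains a valid storage function on $[0,\infty)$, and the integrated passivity inequality $V(\tau) - V(0) \leq \int_0^\tau \dot y_{H_1}^T \dot u_x\, dt$ holds globally in time.
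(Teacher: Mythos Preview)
Your proposal is correct and follows the same skeleton as the paper: composite storage $V=V_{H_1}+V_{H_2}+V_{H_3}$, then cancellation of the internal supply terms via the symmetry of $L\otimes I_l$. The one genuine difference is your treatment of $u_x$. You release $u_x$ as an external port and obtain the clean Krasovskii dissipation inequality $\dot V \le -\lambda_{\min}(\mathbb H)\|\dot y_{H_1}\|^2 + \dot y_{H_1}^T\dot u_x$, using $a^*>1$ only to discard both $\dot W$ terms. The paper instead keeps $u_x=-(L\otimes I_l)x$ substituted inside $\dot V_{H_1}$ (exactly the ``double counting'' you warn against), which produces the extra $-\dot x^T(L\otimes I_l)\dot x$ term and hence the larger dissipation coefficient $\lambda_{\min}(\mathbb H)+a^*\lambda_2(L\otimes I_l)$ in \eqref{hwz}; the residual $(1-a^*)\lambda_2(L\otimes I_l)\|y_{H_1}\|^2$ term is what carries the $u_x$--$y_{H_1}$ relationship in the paper's version (cf.\ the rewriting in \eqref{ndv11}). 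Your route gives a more transparent passivity statement with standard port variables $(\dot u_x,\dot y_{H_1})$; the paper's route buys the sharper dissipation constant that is later reused verbatim in the $L_2$-gain bound \eqref{l2bound}.
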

\begin{proof}
	Let $V$ be the candidate Lyapunov function for the interconnected system represented in Fig. \ref{dpd} such that
	\begin{align} 
	V = V_{H_1}+V_{H_2}+V_{H_3}. \label{ly}
	\end{align}
	Differentiating \eqref{ly} and using \eqref{ddh1}, \eqref{ddh21}, \eqref{dh31} yields
	\begin{align} 
	\dot{V} &= \dot{V}_{H_1}+\dot{V}_{H_2}+\dot{V}_{H_3},\nonumber\\
	&\leq-\big(\lambda_{min}({\mathbb{H}})+a^*\lambda_{2}(L \otimes I_{l})\big)\norm{\dot{y}_{H_1}}^2\nonumber\\&~~~+(1-a^*)\lambda_{2}(L \otimes I_{l})\norm{y_{H_1}}^2+\dot{{y}}_{H_1}^T\dot{u}_{H_1}+\dot{u}_{H_2}^T\dot{y}_{H_2}\nonumber\\&~~~+\dot{u}_{H_3}^T\dot{y}_{H_3}\\
	&\leq -\big(\lambda_{min}({\mathbb{H}})+a^*\lambda_{2}(L \otimes I_{l})\big)\norm{\dot{y}_{H_1}}^2\nonumber\\&~~~+(1-a^*)\lambda_{2}(L \otimes I_{l})\norm{y_{H_1}}^2.\label{hwz}
	\end{align}
	Thus, the interconnected network dynamics \eqref{h1d}-\eqref{h3} is passive, if $a^*>1$ strictly holds. 
	
	The following result establishes the boundedness of the trajectories of \eqref{h1d}-\eqref{h3}.
	\begin{proposition}\label{bounded}
		The trajectories of \eqref{h1d}-\eqref{h3} are bounded for any finite initial conditions.
	\end{proposition}
	\begin{proof}
		To show that the trajectories of \eqref{h1d}-\eqref{h3} are bounded, consider the following storage function:
		\begin{align}
		\bar{V} = \frac{1}{2}\norm{x-x^*}^2+\frac{1}{2}\norm{\alpha-\alpha^*}^2+\frac{1}{2}\norm{\theta-\theta^*}^2+W.\label{vbar}
		\end{align}
		where $W$ is the storage function defined in \eqref{va}.
		Differentiating \eqref{vbar} with respect to time yields
		\begin{align}
		\dot{\bar{V}}&=-\nabla_x \bar{\mathcal{L}}(x,\alpha,\theta)^T(x-x^*)+\nabla_\alpha \bar{\mathcal{L}}(x,\alpha,\theta)^T(\alpha-\alpha^*)\nonumber\\
		&~~~+(\theta-\theta^*)^T\nabla_\theta [\bar{\mathcal{L}}(x,\alpha,\theta)]^+_\theta+\dot{W},\nonumber\\
		&\leq -\nabla_x \bar{\mathcal{L}}(x,\alpha,\theta)^T(x-x^*)+\nabla_\alpha \bar{\mathcal{L}}(x,\alpha,\theta)^T(\alpha-\alpha^*)\nonumber\\
		&~~~+\sum_i \sum_k \sum_j(\theta^{ik}-(\theta^{ik}_j)^*)[g_j(x_{ik})]^+_{\theta^{ik}_j}+\dot{W},\nonumber\\
		&\leq -\nabla_x \bar{\mathcal{L}}(x,\alpha,\theta)^T(x-x^*)+\nabla_\alpha \bar{\mathcal{L}}(x,\alpha,\theta)^T(\alpha-\alpha^*)\nonumber\\
		&~~~+(\theta-\theta^*)^Tg(x)+\dot{W}.\label{end is near}
		\end{align}
		Note that $(\theta^{ik}_j-(\theta^{ik}_j)^*)g_j(x_{ik})\geq 0,\forall j \in \sigma_{i}(t)$ because $g_j(x_{ik})<0$ and $\theta^{ik}_j=0$ as confirmed by \eqref{thetadot}. Using first order condition of convexity-concavity of the Lagrangian function \eqref{dlag} and replacing $\dot{W}$ by right-hand side of \eqref{dva1}, \eqref{vbra} modifies to the following:
		\begin{align}
		\dot{\bar{V}}
		&\leq
		-[\bar{\mathcal{L}}(x,\alpha,\theta)-\bar{\mathcal{L}}(x^*,\alpha,\theta)]+[\bar{\mathcal{L}}(x,\alpha,\theta)-\bar{\mathcal{L}}(x,\alpha^*,\theta)]\nonumber\\
		&~~~+[\bar{\mathcal{L}}(x,\alpha,\theta)-\bar{\mathcal{L}}(x,\alpha,\theta^*)]\nonumber\\
		&~~~+(1-a^*)\dot{x}^T(L \otimes I_{l})\dot{x}+(1-a^*)x^T(L \otimes I_{l})x.\label{vbra}
		\end{align}
		Since $(x^*,\alpha^*,\theta^*)$ is the saddle-point of \eqref{dlag}, with $a^*>1$ yields the following
		\begin{align}
		\dot{\bar{V}}\leq 0.\label{bounda}
		\end{align}
		which is sufficient to ensure that the trajectories of \eqref{h1d}-\eqref{h3} are bounded.
	\end{proof}
	
	In what follows, the asymptotic stability of the saddle-point solution of \eqref{h1d}-\eqref{h3} is established. To this end, the underlying networked dynamics is represented as a hybrid system wherein $H_1$, $H_2$ are represented as continuous-time dynamical systems and $H_3$ is represented as a system with right-hand side discontinuity. The framework of LaSalle's invariance principle for hybrid dynamical systems (see, \cite{lygeros2003dynamical}) is stated below, which in our case provides a useful result on the convergence of \eqref{h1d}-\eqref{h3} to the saddle point solution that satisfies \eqref{dkkt}.
	\begin{proposition}\label{hyb}
		Consider the hybrid networked dynamics \eqref{h1d}-\eqref{h3} and let $z = [x^T,\alpha^T,\theta^T]^T \in \mathcal{X} \subseteq \mathbb{R}^{ln(2+m^{ik}_g)}$, and $\Psi \subseteq \mathcal{X}$ be compact and positively invariant. Assuming that the Lyapunov function $V$ defined in \eqref{ly} is continuously differentiable and $\dot{V} \leq 0$ along the trajectories of $z(t) \in \Psi$, every trajectory in $\Psi$ converges to $\epsilon$, where $\epsilon \subset \Psi$ is a maximal positive invariant set of $\Psi$ such that
		\begin{enumerate} 
			\item $\dot{V}=0$ for a fixed $\sigma$.
			\item $V_k(\theta^{ik}_{j}(\tau^+_{\sigma_i}))=V_k(\theta^{ik}_{j}(\tau^-_{\sigma_i}))$ for a switching instance $\tau$ between $\tau^-_{\sigma_i}$ and $\tau^+_{\sigma_i}$.
		\end{enumerate}
	\end{proposition}
\end{proof}
Proposition \ref{hyb} gives the next result on the convergence of \eqref{h1d}-\eqref{h3} to the saddle point solution that satisfies the conditions in \eqref{dkkt}.
\begin{proposition}\label{thmn-1}
	The hybrid network dynamics \eqref{h1d}-\eqref{h3} converges to the saddle point solution $x^*,\alpha^*,\theta^*$ satisfying \eqref{dkkt}.
\end{proposition}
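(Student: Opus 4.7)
My strategy is to apply the hybrid LaSalle principle of Proposition~\ref{hyb} with the Lyapunov function $V$ defined in \eqref{ly}, and then to show that its maximal invariant subset coincides with the set of triples satisfying the KKT conditions \eqref{dkkt1}. Proposition~\ref{bounded} already supplies a compact positively invariant set $\Psi$ containing the trajectory. Summing the passivity computations for $H_1,H_2,H_3$ while retaining the tight quadratic form rather than the eigenvalue relaxation in \eqref{hwz} yields, between switching instants,
\begin{equation*}
\dot V \;\le\; -\dot x^T\mathbb H\dot x \;-\; a^*\,\dot x^T(L\otimes I_l)\dot x \;-\; (a^*-1)\,x^T(L\otimes I_l)x,
\end{equation*}
a sum of three non-positive terms when $a^*>1$. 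The two cases in the proof of passivity of $H_3$ show that $V$ does not increase across switching instants, so both hypotheses of Proposition~\ref{hyb} are in force.

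Second, I would characterize the locus on which the above bound vanishes. Strong convexity of $f$ gives $\mathbb H\succ 0$, so the first term vanishes only when $\dot x\equiv 0$; connectedness of $\mathcal G$ makes the kernel of $L\otimes I_l$ equal to the consensus subspace, so the last term vanishes only when $x$ lies in that subspace. Consequently, on the invariant set $x$ is constant in time and satisfies $x_{ik}=x_{qk}$ for all $i,q,k$; this immediately yields $u_x=-(L\otimes I_l)x=0$, $\dot\alpha=(L\otimes I_l)x=0$, and $\dot a_{iq}=d_{iq}(e_{iq}^Te_{iq}+\dot e_{iq}^T\dot e_{iq})=0$, so $\alpha$ and all coupling weights are also constant. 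Substituting $\dot x=0$ back into the $H_1$-dynamics \eqref{h1d} produces the stationarity identity $\nabla f(x)+(L\otimes I_l)\alpha+y_{H_3}=0$.

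Third, I would recover primal and dual feasibility and complementarity by ruling out $\dot\theta\not\equiv 0$. Since $x$ is constant on the invariant set, $g_j(x_{ik})$ is constant, and the projected rule \eqref{thetadot} makes $\dot\theta^{ik}_j$ constant as well. If $\dot\theta^{ik}_j>0$ for some index, then $g_j(x_{ik})>0$ and $\theta^{ik}_j(t)\to+\infty$, contradicting the boundedness provided by Proposition~\ref{bounded}; this leaves only the regimes $\{g_j(x_{ik})=0\}$ and $\{\theta^{ik}_j=0,\ g_j(x_{ik})\le 0\}$, both of which force $\dot\theta^{ik}_j=0$ together with $g_j(x_{ik})\le 0$, $\theta^{ik}_j\ge 0$ and $\theta^{ik}_j g_j(x_{ik})=0$. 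Combined with the consensus and stationarity already obtained, this gives every line of \eqref{dkkt1}. Proposition~\ref{hyb} then yields convergence of the trajectory to this KKT locus; strong convexity of $f$ pins down the primal limit uniquely as $x^*$, while the duals converge to a compatible pair $(\alpha^*,\theta^*)$.

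The step I expect to be the main obstacle is the third paragraph: unlike $\dot x$ and $\dot\alpha$, the condition $\dot\theta\equiv 0$ does not drop out of $\dot V=0$ directly, because the inequality $\dot V_{H_3}\le\dot y_{H_3}^T\dot u_{H_3}$ is already tight as soon as $\dot x=0$. Closing this gap requires invoking boundedness to exclude runaway multipliers and then carefully handling the switching sets $\sigma_i(t)$ so that the partition of constraints into active and inactive at the limit is extracted consistently across the switching instants of the hybrid dynamics.
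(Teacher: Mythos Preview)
Your proposal is correct and follows essentially the same route as the paper: invoke the hybrid LaSalle principle (Proposition~\ref{hyb}) with the Lyapunov function $V$ of \eqref{ly}, use boundedness (Proposition~\ref{bounded}) to furnish the compact invariant set, and then argue that on the invariant set $\dot\theta^{ik}_j$ cannot be positive because that would drive $\theta^{ik}_j$ to infinity and contradict boundedness. Your write-up is in fact considerably more detailed than the paper's own proof, which is a terse sketch that jumps directly from Proposition~\ref{hyb} to the boundedness contradiction for $\theta$ without explicitly deriving $\dot x=0$, consensus of $x$, or $\dot\alpha=0$ from the structure of $\dot V$; the ``main obstacle'' you flag is precisely the one step the paper does spell out.
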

\begin{proof}
	From Proposition \ref{hyb}, for a fixed $\sigma$, $\dot{V}=0$. Thus the primal as well as dual dynamics in \eqref{h1d}-\eqref{h3} converge to the saddle point solution  contained within the set $\epsilon$. If $g_j(x^*_{ik})<0$ then $(\theta^{ik}_{j})^*=0$. However, if $g_j(x^*_{ik})>0$, then $(\theta^{ik}_{j})^*$ will penalize the constraint violation by rising to a large value. Since all trajectories are bounded, it contradicts the continuity of $V$, thus $\dot{\theta}^{ik}_{j}=0$. To this end, the solutions of \eqref{h1d}-\eqref{h3} also satisfy the KKT conditions \eqref{dkkt} and yield the saddle point solution $(x^*,\alpha^*,\theta^*)$.
\end{proof}
Choosing $a^*>1$ and using \eqref{ux}, \eqref{hwz} modifies to 
\begin{align}
\dot{V}&\leq-\big(\lambda_{min}({\mathbb{H}})+a^*\lambda_{2}(L \otimes I_{l})\big)\norm{\dot{y}_{H_1}}^2 \nonumber\\&~~~+(1-a^*)\lambda_{2}(L \otimes I_n)\norm{y_{H_1}}^2\leq 0.\label{ndv1}
\end{align} 
\begin{proposition}\label{thmn}
	The saddle point solution of \eqref{dlag} is asymptotically stable. 
\end{proposition}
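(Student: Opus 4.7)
The plan is to combine the Lyapunov-stability content already present in the boundedness argument with the attractivity conclusion from Proposition \ref{thmn-1}. Asymptotic stability of the saddle-point $(x^*, \alpha^*, \theta^*)$ requires two ingredients: (i) Lyapunov stability in the usual sense, i.e.\ trajectories starting close to the equilibrium remain close, and (ii) attractivity, i.e.\ these trajectories eventually converge to the equilibrium. The second ingredient is essentially given to us by Proposition \ref{thmn-1}, while the first must be extracted from the non-increasing-storage-function analysis built around $\bar{V}$.

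First, I would reuse the storage function $\bar{V}$ from \eqref{vbar} rather than the Krasovskii-type $V$, since $\bar{V}$ is manifestly positive definite with respect to the deviations $(x-x^*, \alpha-\alpha^*, \theta-\theta^*)$ and the coupling-gain deviation $\tilde{a}_{iq}$ built into $W$. In particular, $\bar{V}$ vanishes exactly on the set where $x = x^*$, $\alpha = \alpha^*$, $\theta = \theta^*$, and $a_{iq} = a^*_{iq}$, and is strictly positive otherwise. The inequality \eqref{bounda}, which was derived under the hypothesis $a^* > 1$ of Proposition \ref{prof3.4}, states $\dot{\bar{V}} \leq 0$ along the continuous flow. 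A standard $\varepsilon$–$\delta$ argument using sublevel sets of $\bar V$ then delivers Lyapunov stability of the saddle-point.

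Second, I would handle the hybrid/switching character of the $\theta$-dynamics by appealing to the case analysis already carried out in the passivity proof of $H_3$, which shows that $V_{H_3}$ is non-increasing across switching instances; the same argument applies to the $\theta$-block of $\bar{V}$, so no jump in $\bar{V}$ can destroy the sublevel-set invariance used in the stability step. With stability in hand, attractivity follows from Proposition \ref{thmn-1}, which already established that every trajectory of \eqref{h1d}--\eqref{h3} converges to a point satisfying the KKT conditions \eqref{dkkt}, i.e.\ to the saddle point $(x^*, \alpha^*, \theta^*)$ that is unique under Assumption \ref{asss}.

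The main obstacle I expect is making sure the sublevel-set argument is valid in the presence of the projected dynamics for $\theta$ and the time-varying coupling weights $a_{iq}$: one has to check that sublevel sets of $\bar{V}$ are indeed positively invariant not only along the continuous flow (which is immediate from $\dot{\bar V}\le 0$) but also across the discrete transitions introduced by \eqref{proj}, and that $W$ does not blow up due to the adaptive law \eqref{ajl}. Boundedness of $\tilde a_{iq}$ is inherited from the fact that $W$ sits inside the non-increasing $\bar{V}$, so this last point is a short verification rather than a genuine difficulty. Together, these steps give both Lyapunov stability and attractivity, hence asymptotic stability of the saddle-point solution.
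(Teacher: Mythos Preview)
Your proposal is correct and, in spirit, covers the same ground as the paper. The paper's own proof is a one-liner: it simply invokes Proposition~\ref{prof3.4} (passivity of the interconnection, within whose proof the boundedness result via $\bar V$ is already nested), Proposition~\ref{thmn-1} (convergence to the KKT point), and the inequality \eqref{ndv1} for the Krasovskii-type function $V$.

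Where you differ is in the choice of Lyapunov certificate for the \emph{stability} half. You work with the distance-based storage $\bar V$ of \eqref{vbar} rather than the Krasovskii $V$ of \eqref{ly}. This is arguably the cleaner route: $\bar V$ is positive definite in the state deviations $(x-x^*,\alpha-\alpha^*,\theta-\theta^*,\tilde a_{iq})$ and immediately yields the $\varepsilon$--$\delta$ stability via sublevel-set invariance, whereas $V$ is quadratic in the \emph{velocities} and does not by itself certify Lyapunov stability in the standard sense without additional argument. Both routes then close with Proposition~\ref{thmn-1} for attractivity, so the overall logical structure is the same. One small caveat worth making explicit in your write-up: the adaptive gains $a_{iq}$ have no prescribed equilibrium value (the law \eqref{ajl} is monotone and halts only once consensus is reached), so $\bar V$ does not vanish at every rest point of the full $(x,\alpha,\theta,a)$ system. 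Your argument therefore delivers Lyapunov stability of the saddle point in the $(x,\alpha,\theta)$ coordinates, with the $a_{iq}$ merely remaining bounded via the $W$-term --- which is precisely what the proposition claims.
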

\begin{proof}
	The proof is straightforward from Proposition \ref{prof3.4} and Proposition \ref{thmn-1} and \eqref{ndv1}.
\end{proof}
In the recent article \cite{cherukuri2018role} the global asymptotic stability of the primal-dual dynamics is proved by using the Lyapunov function similar to  that of the sum of Krasovskii-type Lyapunov function \eqref{ly} and the Lyapunov function defined in \eqref{vbar}. This result can be extended to the globally asymptotic stability of the saddle-point of \eqref{dlag}.
\begin{remark}\label{remarkcherku}
	Let $\tilde{V}:\mathbb{R}^{ln} \times \mathbb{R}^{ln} \times \mathbb{R}^{lnm^{ik}_g} \rightarrow \mathbb{R}$ denote the candidate Lyapunov function for the ADPDD \eqref{h1d}-\eqref{h3}, given as sum of the candidate Lyapunov functions \eqref{ly} and \eqref{vbar} as follows:
	\begin{align}
	\tilde{V} = V+\bar{V}.\label{tildev}
	\end{align} If Assumption \ref{asss} holds then the trajectories of \eqref{h1d}-\eqref{h3} converge to the saddle-point $(x^*,\alpha^*,\theta^*)$ which is globally asymptotically stable. The proof of the Remark would be similar to proof the of \cite[Theorem 5.1]{cherukuri2018role}. Hence it is omitted from here to avoid repetition. 
\end{remark}

With the global asymptotic stability of the proposed dynamics \eqref{h1d}-\eqref{h3} established, the subsequent section addresses its rate of convergence and its comparison with the rate of convergence with the primal-dual dynamics without adaptive weights. 
\subsection{Accelerated convergence using ADPDD}\label{fasterc}
Let $\mathcal{A}\subseteq\mathbb{R}^{|\mathcal{E}|}_{>0}$ define the set of coupling weights, and $|\mathcal{E}|$ define the cardinality of the edge set $\mathcal{E}$. Then, in view of its definition, the Laplacian matrix $L\otimes I_l$ is a parameter varying, real and symmetric matrix, which is differentiable and uniformly continuous on $\mathcal{A}$. As a consequence, the following hold:
\begin{statement}
	There exists $\Lambda>0$ such that the spectral norm $\norm{L \otimes I_l}<\Lambda,\forall a_{iq} \in \mathcal{A},\forall q \in \mathcal{N}_i,\forall i \in \mathcal{N}$. 
\end{statement}
\begin{statement}\label{as2}
	The gradient of $L \otimes I_l$ with respect to $a_{iq}$ is bounded above by some scalar $\eta$,
	$\left\| \nabla L \otimes I_l \right\| \leq \eta, a_{iq}\in \mathcal{A}$. 
\end{statement}

Let $L_0\otimes I_l$ be the Laplacian matrix of $\mathcal{G}$ whose coupling weights are constant parameters, then $L_0\otimes I_l$ results in a constant matrix.
\begin{proposition}\label{lemma12}
	If the coupling weights evolve according to the law \eqref{ajl}, then the following holds $\forall t>t_0$ :
	\begin{align}
	\lambda_{2}(L \otimes I_l) > \lambda_{2}(L_0\otimes I_l).\label{fast}
	\end{align}
\end{proposition}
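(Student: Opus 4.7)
The plan is to combine two facts: the adaptive update law \eqref{ajl} forces every coupling weight to be monotonically non-decreasing in time, and the weighted graph Laplacian depends monotonically on the edge weights in the Loewner order. Concretely, I would first note that the right-hand side of \eqref{ajl} is a non-negative quantity (a sum of squared norms scaled by the positive constant $d_{iq}$), so $\dot{a}_{iq}(t)\geq 0$ and hence $a_{iq}(t)\geq a_{iq}(t_0)$ for every edge and every $t\geq t_0$. Moreover, the inequality is strict on at least one edge for $t>t_0$: if the network has not yet reached consensus at $t_0$, then some $e_{iq}(t_0)\neq 0$ (otherwise the primal state is already at the saddle point by Proposition~\ref{thmn-1}, and the statement is vacuous), which by continuity keeps $\dot a_{iq}$ strictly positive on a short interval after $t_0$.

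Next, I would exploit the standard edge-sum decomposition of the weighted Laplacian,
\begin{equation*}
L(t)=\sum_{(i,q)\in\mathcal{E}} a_{iq}(t)\,L_{iq},\qquad L_{iq}=(e_i-e_q)(e_i-e_q)^{T}\succeq 0,
\end{equation*}
where $e_i$ denotes the $i$th standard basis vector of $\mathbb{R}^{n}$. Subtracting the corresponding expansion for $L_0$ yields
\begin{equation*}
\bigl(L(t)-L_0\bigr)\otimes I_l=\sum_{(i,q)\in\mathcal{E}}\bigl(a_{iq}(t)-a_{iq}(t_0)\bigr)\,L_{iq}\otimes I_l\succeq 0.
\end{equation*}
Because the graph stays connected (weights remain positive), both $L(t)\otimes I_l$ and $L_0\otimes I_l$ share the kernel spanned by $\mathbf{1}\otimes I_l$, so invoking the Courant--Fischer characterisation of $\lambda_2$ on the orthogonal complement of this kernel immediately gives $\lambda_2(L(t)\otimes I_l)\geq\lambda_2(L_0\otimes I_l)$.

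The main obstacle is promoting this to a strict inequality. For that, I would pick a unit Fiedler eigenvector $v^\ast$ of $L_0\otimes I_l$, observe that connectivity forces $v^\ast$ to be non-constant on the nodes of $\mathcal{G}$, and therefore there exists at least one edge $(i,q)$ on which $v^\ast_i\neq v^\ast_q$. Combining this with the fact that the adaptive law keeps exciting weights across non-synchronized edges until consensus is reached, one argues that $v^{\ast T}\bigl((L(t)-L_0)\otimes I_l\bigr)v^\ast=\sum_{(i,q)}(a_{iq}(t)-a_{iq}(t_0))\|v^\ast_i-v^\ast_q\|^2>0$ for $t>t_0$; inserting this positive contribution into the Rayleigh quotient and using the min-max characterisation yields the strict improvement $\lambda_2(L(t)\otimes I_l)>\lambda_2(L_0\otimes I_l)$. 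Equivalently, one can argue infinitesimally by differentiating $\lambda_2$ along the trajectory, $\dot\lambda_2=\sum_{(i,q)}\dot a_{iq}\,(v^\ast_i-v^\ast_q)^2\geq 0$, with strict positivity whenever any excited edge crosses the Fiedler cut; integrating from $t_0$ then delivers \eqref{fast}. The delicate point, and the one I expect to need the most care, is ensuring that the edges whose weights are \emph{strictly} growing are not simultaneously constant edges of every Fiedler vector, which ultimately reduces to the non-degeneracy of the initial state and the connectivity of $\mathcal{G}$.
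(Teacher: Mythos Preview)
Your approach is essentially the same as the paper's: both arguments establish the Loewner ordering $L(t)\otimes I_l \succeq L_0\otimes I_l$ (you via the edge-sum decomposition $\sum_{(i,q)} a_{iq}L_{iq}$, the paper via the equivalent incidence-matrix form $EC(t)E^T$ with $C(t)\geq I$) and then invoke the Courant--Fischer min--max characterisation of $\lambda_2$ on the complement of the all-ones direction. Your treatment of the \emph{strict} inequality---arguing via the Fiedler vector and the excitation of at least one edge crossing the Fiedler cut---is in fact more careful than the paper's, which simply asserts strictness for $t>t_0$ without spelling out this step.
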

\begin{proof}
	If $E$ is the incidence matrix of the undirected graph $\mathcal{G}$, then the Laplacian matrices $L_0$ and $L \otimes I_l$ can be written as:
	\begin{align}
	L_0\otimes I_l &= EE^T \label{eet}\\
	L \otimes I_l &= EC(t)E^T\label{ecet}
	\end{align}
	where $C(t)$ is a diagonal matrix containing the coupling weights. To prove \eqref{fast}, it is first proved that $x^TEC(t)E^Tx\geq x^TEE^Tx$.
	\begin{align}
	x^TEC(t)E^Tx-x^TEE^Tx&=x^T(EC(t)E^T-EE^T)x\nonumber\\
	&=x^T[E(C(t)-I)E^T]x.
	\end{align}
	For an undirected graph $\mathcal{G}$, $a_{iq}(t_0)\geq 1,\forall q \in \mathcal{N}_i,\forall i \in \mathcal{N}$. Then $\forall(q,i) \in \mathcal{E}$, $C(t)\geq I_{|\mathcal{E}|}$. Hence, 
	\begin{align}
	C(t)-I\geq 0, \label{cgi}
	\end{align} in fact, $C(t_0)$ is a diagonal matrix with the coupling weights $a_{iq}(t_0)$, thus $C(t)\geq C(t_0),\forall t>t_0$. Thus from the above reasoning, and \eqref{cgi},
	\begin{align}
	x^TEC(t)E^T\geq x^TEE^Tx\nonumber
	\end{align}
	From \eqref{eet} and \eqref{ecet},
	\begin{align}
	x^TL\otimes I_lx\geq x^TL_0\otimes I_lx. \label{proved}
	\end{align}
	Let $\lambda_i$ be the $i^{th}$ eigenvalue in the ordered-pair of eigenvalues represented below:
	\begin{align}
	\lambda_{2} \leq \ldots \leq \lambda_{i} \leq\ldots\leq\lambda_n.\label{inequ}
	\end{align}
	Then  according to Courant-Fischer theorem \cite{horn1990matrix},
	\begin{align}
	\lambda_i(EE^T) &= \min_{x\neq 0,x\perp v_1}\frac{x^TEE^Tx}{x^Tx}\nonumber\\
	&\leq\min_{x\neq 0,x\perp v_1}\frac{x^TEC(t)E^Tx}{x^Tx}=\lambda_i(EC(t)E^T)
	\end{align} where $v_1$ is the eigenvector (vector of all ones) corresponding to the eigenvalue $\lambda_1=0$.
	Thus for $i=2$,
	\begin{align}
	\lambda_{2}(EE^T) \leq \lambda_{2}(EC(t)E^T)\nonumber\\
	\lambda_{2}(L_0\otimes I_l) \leq \lambda_{2}(L\otimes I_l)\nonumber\\
	\lambda_{2}(L\otimes I_l) > \lambda_{2}(L_0\otimes I_l),\forall t>t_0.\label{l1}
	\end{align}
\end{proof}
\begin{proposition}\label{lemma23}
	If the coupling weights evolve according to \eqref{ajl}, then the following always hold:
	\begin{align}
	\lambda_{2}(L \otimes I_l) \leq \frac{\lambda_n(L \otimes I_l)}{\lambda_n (L_0 \otimes I_l)}\lambda_2(L_0 \otimes I_l).
	\end{align} 
\end{proposition}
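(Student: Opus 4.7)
The plan is to invoke the Courant--Fischer min-max characterization with test vectors drawn from the spectrum of $L_0\otimes I_l$, and to reuse the incidence-matrix factorizations $L_0\otimes I_l = EE^T$ and $L\otimes I_l = EC(t)E^T$ established in the proof of Proposition~\ref{lemma12}, where $C(t)\succeq I$ by \eqref{ajl} and the initialization $a_{iq}(t_0)\geq 1$. Rearranged, the target inequality reads
\[
  \frac{\lambda_2(L\otimes I_l)}{\lambda_n(L\otimes I_l)} \;\leq\; \frac{\lambda_2(L_0\otimes I_l)}{\lambda_n(L_0\otimes I_l)},
\]
so the content is that the inverse condition number on $\mathbf{1}^\perp$ of the weighted Laplacian cannot exceed that of the uniformly-weighted $L_0$. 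Since this ratio is scale-invariant under $C\mapsto \gamma C$, equality is attained at $C\propto I$, and the proposition asserts that any non-uniform growth of the edge weights only degrades the spectral conditioning.

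Concretely, let $v_0$ be the Fiedler vector of $L_0\otimes I_l$ (unit, $v_0\perp\mathbf{1}$) and $w_0$ the eigenvector for $\lambda_n(L_0\otimes I_l)$. Using $v_0$ as a test direction in the min-characterization of $\lambda_2(L)$ and $w_0$ in the max-characterization of $\lambda_n(L)$, and factoring through the incidence matrix, I obtain
\[
  \frac{\lambda_2(L\otimes I_l)}{\lambda_2(L_0\otimes I_l)} \;\leq\; \frac{y_0^T C\,y_0}{y_0^T y_0}, \qquad
  \frac{\lambda_n(L\otimes I_l)}{\lambda_n(L_0\otimes I_l)} \;\geq\; \frac{z_0^T C\,z_0}{z_0^T z_0},
\]
with $y_0:=E^T v_0$ and $z_0:=E^T w_0$ the edge-space images of the two eigenvectors. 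If $\lambda_2(L_0\otimes I_l)$ is a repeated eigenvalue I would let $v_0$ range over the corresponding eigenspace and pick the minimizer of $y_0^T C y_0/\|y_0\|^2$, which only tightens the first bound. Chaining these two inequalities reduces the proposition to the scalar comparison $y_0^T C y_0/\|y_0\|^2 \leq z_0^T C z_0/\|z_0\|^2$.

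This last step is the main obstacle, and is where the Laplacian structure (rather than the weaker fact $C\succeq I$) must enter---for a generic PSD perturbation the corresponding condition-number monotonicity can fail. A structural argument exploits that \eqref{ajl} enlarges $a_{iq}$ in proportion to $\|e_{iq}\|^2+\|\dot e_{iq}\|^2$, which is largest exactly on those edges where the high-frequency mode $w_0$ varies most, so the entries of $C$ that most exceed unity concentrate where $|z_0|$ is large, forcing the right-hand Rayleigh quotient to dominate. Alternatively one can try a trace-based bound: on symmetric graphs (star, cycle, etc.) the top eigenvector of $L_0$ has $|z_0|$ constant across edges and $z_0^T C z_0/\|z_0\|^2=\bar{c}$, while interlacing applied to $C$ restricted to the cut-space image of $E^T$ gives $y_0^T C y_0/\|y_0\|^2\leq \bar{c}$. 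Making either route work for an arbitrary connected graph is where the technical weight lies; the Courant--Fischer bracketing itself is routine.
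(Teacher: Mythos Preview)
Your Courant--Fischer reduction is sound and considerably more careful than the paper's own argument, which simply asserts that ``taking the ratio of the ordered pair of eigenvalues'' of $L\otimes I_l$ and $L_0\otimes I_l$ yields $\lambda_2(L)/\lambda_2(L_0)\le\lambda_n(L)/\lambda_n(L_0)$, citing only the ordering $\lambda_2\le\cdots\le\lambda_n$ of a \emph{single} matrix. That is not a proof; the ordering of one spectrum says nothing about ratios across two different matrices.

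The gap you flag, however, cannot be closed: the proposition is false for general connected graphs and general $C\succeq I$. In your rearranged form it asserts that uniform edge weights minimise the Laplacian condition number on $\mathbf 1^\perp$, and this already fails on small graphs. Take $K_4$ with one edge subdivided (vertices $1,\dots,5$, vertex $5$ inserted on the edge $\{1,2\}$); the unweighted Laplacian has spectrum $\{0,2,3,4,5\}$, so $\lambda_2(L_0)/\lambda_5(L_0)=2/5$. Now set $c_{15}=c_{25}=1.1$ and keep the remaining five edge weights at $1$. The eigenvectors $(1,-1,0,0,0)$ and $(0,0,1,-1,0)$ give eigenvalues $3.1$ and $4$, while the symmetric block on $\{(a,a,c,c,e)\}$ contributes $0$ together with the roots of $\lambda^2-7.3\lambda+11=0$, namely $\lambda\approx 2.126$ and $\lambda\approx 5.174$. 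Hence $\lambda_2(L)/\lambda_2(L_0)\approx 1.063>1.035\approx\lambda_5(L)/\lambda_5(L_0)$, contradicting the claimed inequality. Since the adaptive law \eqref{ajl} starts from unit weights and only increases them, and since the gains $d_{iq}$ and the initial primal data are unconstrained, weight profiles of this shape are attainable along the flow. Your heuristic that the dynamics preferentially loads the high-frequency edges is precisely the extra structural input one would need, but it is not available uniformly in $t$ and certainly does not follow from $C\succeq I$ alone; both your argument and the paper's stall at exactly the point where the statement itself fails.
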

\begin{proof}
	The proof simply follows from the inequality \eqref{inequ}. Taking the ratio of the ordered pair of eigenvalues of $L \otimes I_l$ and $L_0 \otimes I_l$, yields the following:
	\begin{align}
	\frac{\lambda_{2}(L \otimes I_l)}{\lambda_{2}(L_0 \otimes I_l)}\leq \frac{\lambda_{n}(L \otimes I_l)}{\lambda_{n}(L_0 \otimes I_l)},\label{inequ2}
	\end{align}
	But, for $t>t_0$, the inequality \eqref{inequ2} strictly holds. Thus 
	\begin{align}
	\lambda_{2}(L \otimes I_l) \leq \frac{\lambda_n(L \otimes I_l)}{\lambda_n (L_0 \otimes I_l)}\lambda_2(L_0 \otimes I_l). \label{lambdamax}
	\end{align} 
\end{proof}
Proposition \ref{lemma12} and \ref{lemma23} can be further used to prove that the adaptive primal dual dynamics has an accelerated, yet bounded convergence rate as compared to the conventional primal-dual dynamics. 
\begin{proposition}\label{prope}
	If the inequality \eqref{l1} holds, then the primal dynamic in \eqref{h1d}, under the adaptive coupling law \eqref{ajl}, achieves accelerated convergence.
\end{proposition}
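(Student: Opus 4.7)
The plan is to isolate the primal subsystem $H_1$ under the time-scale separation between the adaptive coupling law \eqref{ajl} and the primal-dual flow that was noted as contribution item 4. Under this separation, the adaptive weights $a_{iq}$ can be treated as quasi-static, so that $(L\otimes I_l)$ is effectively time-invariant on the slower primal time scale but with magnitude strictly larger than $(L_0\otimes I_l)$ by virtue of Proposition \ref{lemma12}. The goal is then to show that the exponential decay rate of $\|x-x^*\|$ along \eqref{h1d} is a strictly increasing function of $\lambda_2(L\otimes I_l)$.

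First I would introduce the candidate Lyapunov function $V_p(x)=\tfrac{1}{2}\|x-x^*\|^2$ and evaluate $\dot V_p$ along \eqref{h1d} after subtracting the KKT stationarity condition \eqref{dkkt1} at the saddle point, so that all terms are expressed as differences from the equilibrium. Writing $\tilde x = x-x^*$ and noting that $(L\otimes I_l)x^* = 0$ because $x^*$ lies on the consensus subspace, this yields
\begin{align*}
\dot V_p \;=\; -\tilde x^T\!\bigl[\nabla f(x)-\nabla f(x^*)\bigr] - \tilde x^T(L\otimes I_l)\tilde x - \tilde x^T\!\bigl[(L\otimes I_l)(\alpha-\alpha^*) + (y_{H_3}-y_{H_3}^*)\bigr].
\end{align*}
Second, I would bound the first term by $-\mu\|\tilde x\|^2$ using the $\mu$-strong convexity from Assumption \ref{asss}, bound the Laplacian quadratic form by $-\lambda_2(L\otimes I_l)\|\tilde x\|^2$ after decomposing $\tilde x$ into its consensus and consensus-orthogonal components (the consensus component is driven to zero by the feedback interconnection proved passive in Section \ref{passive_pd}), and absorb the dual-coupling term via the passivity certificate already in hand (or, equivalently, invoke convexity of $g_j$ and complementary slackness to show its contribution is non-positive in a neighborhood of the saddle point). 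Combining, one obtains an estimate of the form $\dot V_p \leq -2\bigl(\mu+\lambda_2(L\otimes I_l)\bigr) V_p$, hence exponential convergence at rate $\mu+\lambda_2(L\otimes I_l)$.

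Finally, Proposition \ref{lemma12} gives $\lambda_2(L\otimes I_l)>\lambda_2(L_0\otimes I_l)$ for all $t>t_0$, so the adaptive rate strictly exceeds the rate $\mu+\lambda_2(L_0\otimes I_l)$ achieved by the constant-weight distributed primal dynamics, which is the precise meaning of accelerated convergence. Proposition \ref{lemma23} supplies the upper bound $\lambda_2(L\otimes I_l)\leq \tfrac{\lambda_n(L\otimes I_l)}{\lambda_n(L_0\otimes I_l)}\lambda_2(L_0\otimes I_l)$, certifying that the acceleration remains bounded and the adaptive gains do not run away. The main obstacle I expect is making the time-scale separation rigorous and cleanly handling the $\theta$-channel: the projection in \eqref{thetadot} makes the inequality-dual contribution non-smooth, so the Lyapunov estimate must be interpreted in the switched-system sense used in Section \ref{passive_pd}, and the consensus/orthogonal decomposition of $\tilde x$ must be done carefully so that the Fiedler-value bound $\lambda_2(L\otimes I_l)$ is actually attained on the relevant subspace rather than nullified by $\tilde x$ lying along the all-ones direction.
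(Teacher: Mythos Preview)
Your overall strategy is sound and reaches the same conclusion as the paper, but the route differs in two substantive ways.

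First, the paper uses a Krasovskii-type Lyapunov function $V_{H_1}=\tfrac{1}{2}\dot x^T\dot x$ rather than your standard quadratic $V_p=\tfrac{1}{2}\|x-x^*\|^2$. Differentiating along the isolated primal flow gives $\dot V_{H_1}=-\dot x^T(\mathbb{H}+L\otimes I_l)\dot x$, and the paper bounds this directly by $-\bigl(\lambda_{\min}(\mathbb{H})+\lambda_2(L\otimes I_l)\bigr)\|\dot x\|^2$, obtaining the exponential rate $\lambda_m=2\bigl(\lambda_{\min}(\mathbb{H})+\lambda_2(L\otimes I_l)\bigr)$ and then comparing with $\lambda_{m_0}=2\bigl(\lambda_{\min}(\mathbb{H})+\lambda_2(L_0\otimes I_l)\bigr)$. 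Your rate $\mu+\lambda_2(L\otimes I_l)$ is the strong-convexity analogue and is essentially equivalent.

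Second, and more importantly, the paper does \emph{not} attempt to absorb the dual-coupling term through passivity as you propose. Instead it simply sets $u_{H_1}$ to its steady-state value (equivalently, zero) and analyzes the reduced primal flow $\dot x=-\nabla_x f(x)-(L\otimes I_l)x$ in isolation, invoking the time-scale separation to justify freezing both the adaptive weights \emph{and} the dual input. This sidesteps entirely the two obstacles you flag: there is no $\alpha$- or $\theta$-channel cross term to handle, and no need to argue that the projection in \eqref{thetadot} contributes non-positively. Your plan to ``absorb the dual-coupling term via the passivity certificate'' is the step most likely to cause trouble, because the passivity inequalities in Section~\ref{passive_pd} are written in the $(\dot u,\dot y)$ port variables, not in the $(u,y)$ variables that appear in your $\dot V_p$ expression; making that absorption rigorous would require an additional argument that is not already in the paper. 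The paper's shortcut of zeroing $u_{H_1}$ is cruder but avoids this difficulty altogether. Your consensus/orthogonal decomposition concern is equally present in the paper's argument (the bound $\dot x^T(L\otimes I_l)\dot x\geq\lambda_2\|\dot x\|^2$ also needs $\dot x\perp\mathbf{1}$), so neither approach is cleaner on that point.
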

\begin{proof}
	Below a timescale separation is enforced in the dynamics of the primal subsystem $H_1$,
	\begin{align} 
	\dot{x}&=-\nabla_xf(x)+u_x+u_{H_1},\\
	\dot{a}_{iq}&=\epsilon d_{iq}(e^T_{iq}e_{iq}+\dot{e}^T_{iq}\dot{e}_{iq}),\forall i \in \mathcal{N},\forall q \in \mathcal{N}_i,
	\end{align} 
	with $\epsilon<<1$ ensuring that the primal variable $x_i$ evolves faster than the coupling weights $a_{iq}$.
	
	The primal subsystem has two control inputs $u_x$, to study the primal dynamics with respect to $u_x$ in \eqref{ux}, let us analyze the primal subsystem $H_1$ when $u_{H_1}$ is at steady state or equal to $0$. With the assumption that the coupling weight dynamics is much slower, the primal dynamics is re-written as:
	\begin{align} 
	\dot{x}&=-\nabla_xf(x)-(L \otimes I_l)x+u_{H_1},\nonumber\\
	&=-F(x)+u_{H_1},\label{new primal}
	\end{align} where $F(x) = \nabla_xf(x)+(L \otimes I_l)x$ .
	
	Using Assumption \ref{asss} it can be proved that the primal dynamics \eqref{new primal} is strongly monotone for all $x \in \mathbb{R}^{ln}$ by evaluating the Jacobian of $F(x)$, i.e. $\nabla F(x) = \mathbb{H} + L \otimes I_l \geq \mu I$, where $\mu$ is the modulus of convexity of $f$ (from Assumption \ref{asss}. Since $\mu>0$, the Jacobian $\nabla F(x)$ is symmetric and positive definite $\forall x \in \mathbb{R}^{ln}$. It proves that $F(x)$ is strongly monotone by virtue of which the primal dynamics \eqref{new primal} converges to the unique optimizer $x^*$. Thus uniqueness of the primal optimizer remains invariant under the adaptive coupling law \eqref{ajl}. 
	
	The following result establishes the accelerated convergence of \eqref{new primal} with respect to the unique optimizer $x^*$.
	Let $V_{H_1}$ define the Lyapunov candidate function as given below:
	\begin{align}
	V_{H_1}=\frac{1}{2}\dot{x}^T\dot{x}.
	\end{align}
	Differentiating $V_{H_1}$ with respect to time $t$,
	\begin{align}
	\dot{V}_{H_1}&\leq -(\lambda_{min}(\mathbb{H})+\lambda_{2}(L \otimes I_l))\dot{x}^T\dot{x}\\
	&\leq -\lambda_mV_{H_1} \label{finalvdot}
	\end{align}
	where $\lambda_m = 2(\lambda_{min}(\mathbb{H})+\lambda_{2}(L \otimes I_l))$.
	Therefor, 
	\begin{align}
	V_{H_1}(x(t))\leq V_{H_1}(x(t_0))\exp\{-\lambda_mt\},\forall t\geq t_0.
	\end{align}
	or
	\begin{align}
	\norm{x-x^*}\leq \sqrt{2V_{H_1}(x(t_0))}\exp\{-0.5\lambda_mt\},\forall t\geq t_0.\label{globe}
	\end{align}
	Further, since the primal-dual dynamics has a bounded convergence with respect to the saddle point solution (see Proposition \ref{bounda}), using  Assumption \ref{lipsch}, and Remark \ref{remarkcherku}, every initial condition $x(t_0)\in \mathbb{R}^{ln}$ approaches the optimal solution $x^*$ faster than the usual. Thus the accelerated convergence holds globally.
	Considering the upper bound on $\lambda_{2}(L \otimes I_l)$ as given in \eqref{lambdamax}, let $\lambda_{2}(L \otimes I_l)=\frac{\lambda_n(L \otimes I_l)}{\lambda_n (L_0 \otimes I_l)}\lambda_2(L_0 \otimes I_l)$ and $\lambda_{m_0}= 2(\lambda_{min}(\mathbb{H})+\lambda_{2}(L_0 \otimes I_l))$. Then it is seen that $\lambda_m=\lambda_{min}(\mathbb{H})+\frac{\lambda_n(L \otimes I_l)}{\lambda_n (L_0 \otimes I_l)}\lambda_2(L_0 \otimes I_l)>>\lambda_{m_0}$.
	Hence proved.
\end{proof}
\begin{remark}
	The ADPDD \eqref{h1d}-\eqref{h3} achieves accelerated convergence to the saddle point solution that satisfies the KKT conditions \eqref{kkta}. 
\end{remark}
\begin{proof}
	The proof follows from Proposition \ref{thmn-1} and Proposition \ref{prope}. The occurrence of the primal optimizer $x^*$ and the dual optimizers is simultaneous. 
	
	Recall from \eqref{h1d}-\eqref{h3} that $y_{H_1}=x, y_{H_2}=\alpha$, and $y_{H_3} =\sum_{j=1}^{m^{ik}_g}\theta^{ik}_j\nabla_{x_{ik}}g_j(x_{ik}),\forall^l_{k=1};\forall^{m^{ik}_g}_{j=1}$. $y_{H_1}=x^* \implies \dot{y}_{H_1}=0$ which also implies that $\dot{\alpha}=0$ and $\dot{\theta}=0$. 
	Thus accelerated convergence of \eqref{h1d} to the primal optimizer $x^*$ implies the accelerated convergence of both \eqref{h2d} and \eqref{h3} converge to the dual optimizers $(\alpha^*,\theta^*)$.
\end{proof}
\begin{remark}\label{pdot}
	Since the adaptive synchronization in the primal variables results in an accelerated convergence to the primal optimizer $x^*$, the synchronization error $x_i-x_j,\forall j\in \mathcal{N}_i,\forall i\in \mathcal{N}$ remains lower than that of DPDD for all time. This significantly reduces the consensus constraint violation and thus the dual variable $\alpha$ pertaining to the consensus constraints does not become unnecessarily large for the ADPDD problem. The uniqueness of the primal optimizer for both Lagrangian functions \eqref{dlag} and \eqref{dlag1} owes to the strongly monotone property of $F(x)$ as discussed in the Proposition \ref{prope}. However, the dual dynamics \eqref{h2d} (which solely a function of the adaptively coupled primal variables) is not strongly monotone, which implies that the Lagrangian \eqref{dlag} and \eqref{dlag1} are not strongly concave with respect to $\alpha$. Thus, the dual dynamics \eqref{h2d} under the effect of adaptively synchronized primal variables and the one without adaptively synchronized primal variables settle to different equilibrium states. This further indicates that there exists a unique primal optimizer for both Lagrangian functions \eqref{dlag} and \eqref{dlag1} but the same does not hold for the dual optimizer $\alpha^*$. Since the dual variable $\theta^*$ pertains to the local inequality constraints, it remains unaffected by the adaptive synchronization in primal variables. Hence, the dual optimizer $\theta^*$ is also unique for both \eqref{dlag} and \eqref{dlag1}.
\end{remark}

The convergence rate of the distributed primal-dual dynamics is improved under the influence of adaptive synchronization. However, it may adversely affect the robustness of the proposed dynamics. Thus, there arises a necessity to quantify the robustness of the proposed dynamics with respect to the rate of convergence. The analysis presented below obtains a relation between the convergence rate of the proposed dynamics and its $L_2$-gain.
\subsection{Robustness analysis of the network dynamics with respect to the exogenous inputs}\label{l2stab}
Before proceeding with the robustness analysis of this section, it is worth noting the following remark on robustness property of the passive dynamical systems.
\begin{remark}\label{arjan}
	From the inequalities \eqref{ddh1}, \eqref{dpassh2}, and \eqref{passivity}, it is apparent that the interconnected network dynamics comprising \eqref{h1d}-\eqref{h3} is passive, and inherently robust to the perturbations arising in the primal and dual variables [see, Proposition 4.3.1, Remark 4.3.3 of \cite{van2000l2}].  
\end{remark}

Remark \ref{arjan} states the qualitative behavior of the proposed dynamics with respect to the notion of robustness. In the following, the robustness of the proposed dynamics against exogenous inputs is quantified in terms of the $L_2$-gain.

Consider without loss of generality, the new inputs to \eqref{h1d}-\eqref{h3} as
\begin{align} 
\begin{aligned}
H_1:\tilde{u}_{H_1}=u_{H_1}+\bigtriangleup u_{H_1},\\H_2:\tilde{u}_{H_2}=u_{H_2}+\bigtriangleup u_{H_2},\\
H_3:\tilde{u}_{H_3}=u_{H_3}+\bigtriangleup u_{H_3},\label{ni}
\end{aligned}
\end{align} respectively, where $\bigtriangleup u_{(.)}$ corresponds to the perturbations in the input $u_{(.)}\in \mathbb{R}^{ln}$. As discussed in \cite{kosaraju2018primal}, $\bigtriangleup u_{(.)}$ represent additive uncertainties or disturbances such as the numerical error accumulated in the corresponding variables. In what follows, the robustness of the ADPDD is quantified using $L_2$-gain analysis of dynamical systems.
Let ${\tilde{u}}=[{\tilde{u}}^T_{H_1},{\tilde{u}}^T_{H_2},{\tilde{u}}^T_{H_3}]^T$ and ${y}=[{y}^T_{H_1},{y}^T_{H_2},{y}^T_{H_3}]^T$.
\begin{proposition}\label{l21}
	The interconnected network dynamics \eqref{h1d}, \eqref{h2d}, and \eqref{h3} with $a_{iq}$ updated according to \eqref{ajl}, remains $L_2$ stable with the $L_2$-gain, $\gamma\leq \frac{1}{\lambda_{min}({\mathbb{H}})+a^*\lambda_{2}(L \otimes I_{l})}$, if $a^*>1$ strictly holds.
\end{proposition}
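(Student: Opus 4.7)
The plan is to lift the passivity argument of Proposition \ref{prof3.4} to a quantitative $L_2$-gain bound for the perturbed interconnection. I reuse the aggregated storage function $V = V_{H_1} + V_{H_2} + V_{H_3}$ from \eqref{ly}. Replacing $u_{H_i}$ by the perturbed inputs $\tilde{u}_{H_i} = u_{H_i} + \bigtriangleup u_{H_i}$ in the Krasovskii-type passivity inequalities \eqref{ddh1}, \eqref{ddh21}, \eqref{dh31}, the unperturbed feedback equalities $u_{H_1} = -(L\otimes I_l)y_{H_2} - y_{H_3}$, $u_{H_2} = (L\otimes I_l)y_{H_1}$, $u_{H_3} = y_{H_1}$ produce exactly the same cancellations that gave \eqref{hwz}, and the disturbances survive only as additive port cross-terms. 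Writing $\rho := \lambda_{min}(\mathbb{H}) + a^*\lambda_{2}(L\otimes I_l)$, this yields
\begin{equation*}
\dot{V} \le -\rho\,\norm{\dot{y}_{H_1}}^2 + (1-a^*)\lambda_{2}(L\otimes I_l)\,\norm{y_{H_1}}^2 + \dot{y}_{H_1}^T \dot{\bigtriangleup u}_{H_1} + \dot{y}_{H_2}^T \dot{\bigtriangleup u}_{H_2} + \dot{y}_{H_3}^T \dot{\bigtriangleup u}_{H_3}.
\end{equation*}

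Next I would apply Young's inequality $a^T b \le \tfrac{\rho}{2}\norm{a}^2 + \tfrac{1}{2\rho}\norm{b}^2$ to each perturbation cross-term so that the positive $\dot{y}$-quadratic parts are absorbed by the strict output-passivity margin $-\rho\norm{\dot{y}_{H_1}}^2$ of $H_1$, routing the $\dot{y}_{H_2},\dot{y}_{H_3}$ contributions back to $\dot{y}_{H_1}$ through the interconnection identities $\dot{y}_{H_2} = (L\otimes I_l)\dot{y}_{H_1}$ and $\dot{y}_{H_3}$ being generated by $\dot{y}_{H_1}$. With $a^*>1$ the residual $\norm{y_{H_1}}^2$ coefficient is non-positive, and one arrives at $\dot{V} \le -\tfrac{\rho}{2}\norm{\dot{y}_{H_1}}^2 + \tfrac{1}{2\rho}\norm{\dot{\bigtriangleup u}}^2$ up to non-positive terms. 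Integrating from $0$ to $T$, dropping $V(T)\ge 0$, and rearranging gives
\begin{equation*}
\int_0^T \norm{\dot{y}_{H_1}}^2\, dt \;\le\; \frac{2V(0)}{\rho} + \frac{1}{\rho^2}\int_0^T \norm{\dot{\bigtriangleup u}}^2\, dt,
\end{equation*}
which is precisely an $L_2$-gain bound $\gamma \le 1/\rho = 1/(\lambda_{min}(\mathbb{H}) + a^*\lambda_{2}(L\otimes I_l))$ in the Krasovskii differentiated-port sense that matches the passivity pairings $(\dot{u},\dot{y})$ established in Section \ref{passive_pd}.

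The main obstacle I anticipate is the asymmetry between $H_1$, which is output-strictly passive with margin $\rho$, and $H_2,H_3$, which are only passive. There is no intrinsic dissipation at the $H_2$ or $H_3$ ports to absorb the disturbance cross-terms $\dot{y}_{H_2}^T \dot{\bigtriangleup u}_{H_2}$ and $\dot{y}_{H_3}^T \dot{\bigtriangleup u}_{H_3}$; the only way to close the loop is to use that $\dot{y}_{H_2}$ and $\dot{y}_{H_3}$ are algebraically driven by $\dot{y}_{H_1}$ through the feedback structure, so that all three residuals are paid out of the single strict-passivity margin $\rho$ at $H_1$. Managing this bookkeeping cleanly, without letting spectral factors other than $\lambda_{min}(\mathbb{H})$ and $\lambda_{2}(L\otimes I_l)$ enter and degrade the gain, is the quantitative crux of the argument; the hypothesis $a^*>1$ is precisely what keeps $\rho$ strictly positive and the $(1-a^*)\lambda_{2}(L\otimes I_l)\norm{y_{H_1}}^2$ term non-positive, so that both dissipation sources in \eqref{hwz} survive intact into the final gain estimate.
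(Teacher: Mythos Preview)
Your derivation of the first displayed inequality is correct, but from that point on you diverge from the paper in a way that creates extra work and an actual error.

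\textbf{What the paper does differently.} The paper does \emph{not} separate the nominal feedback from the disturbance: it keeps the full perturbed inputs $\tilde{u}_{H_i}=u_{H_i}+\bigtriangleup u_{H_i}$ as the port inputs, so after writing \eqref{intra} it simply collects the three cross-terms into $\dot{\tilde u}^T\dot{y}$, rewrites the $(1-a^*)\lambda_2\norm{y_{H_1}}^2$ term as $-(1-a^*)y_{H_1}^Tu_x$ via \eqref{ux}, and sets $u_x=0$. That leaves $\dot{V}\le -\rho\norm{\dot y_{H_1}}^2+\dot{\tilde u}^T\dot y$, and the bound $\gamma\le 1/\rho$ is then read off directly from the standard ``output-strict passivity $\Rightarrow$ $L_2$-gain $\le 1/\rho$'' fact, with $(\dot{\tilde u},\dot y)$ as the port pair. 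No Young's inequality, no routing of $\dot y_{H_2},\dot y_{H_3}$ back through $H_1$, no integration step spelled out. By choosing $\dot{\tilde u}$ (rather than $\dot{\bigtriangleup u}$) as the exogenous port, the paper sidesteps entirely the bookkeeping problem you identify as the crux.

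\textbf{The gap in your route.} Your proposed absorption relies on the identity $\dot y_{H_2}=(L\otimes I_l)\dot y_{H_1}$, but this is false: from \eqref{h2d} one has $\dot y_{H_2}=\dot\alpha=\tilde u_{H_2}=(L\otimes I_l)y_{H_1}+\bigtriangleup u_{H_2}$, which involves $y_{H_1}$, not $\dot y_{H_1}$. Consequently the Young-inequality residual you would need to absorb is proportional to $\norm{y_{H_1}}^2$, not $\norm{\dot y_{H_1}}^2$, and the single OSP margin $-\rho\norm{\dot y_{H_1}}^2$ at $H_1$ cannot pay for it. The $(1-a^*)\lambda_2\norm{y_{H_1}}^2$ term could in principle help, but then the spectral factor $\lambda_2(L\otimes I_l)$ would enter the gain differently and you would not land on the clean bound $1/\rho$. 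So the obstacle you anticipated is real, and your proposed resolution does not work as written; the paper avoids it by never isolating $\bigtriangleup u$ in the first place.
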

\begin{proof}
	Replacing the inputs in \eqref{h1d}-\eqref{h3} by the new ones as defined in \eqref{ni}, the time differential of the Lyapunov function \eqref{ly} modifies to the following:
	\begin{align} 
	\dot{V}&\leq-\big(\lambda_{min}({\mathbb{H}})+a^*\lambda_{2}(L \otimes I_{l})\big)\norm{\dot{y}_{H_1}}^2 \nonumber\\&~~~+(1-a^*)\lambda_{2}(L \otimes I_n)\norm{y_{H_1}}^2+\dot{{y}}^T_{H_1}\dot{\tilde{u}}_{H_1}\nonumber\\&~~~+\dot{y}_{H_2}^T\dot{\tilde{u}}_{H_2}+\dot{y}_{H_3}^T\dot{\tilde{u}}_{H_3}. \label{intra}\end{align}
	Acknowledging that $y_{H_1}=x$ and using \eqref{ux} in \eqref{intra} further yields
	\begin{align}
	\dot{V}&\leq-\big(\lambda_{min}({\mathbb{H}})+a^*\lambda_{2}(L \otimes I_{l})\big)\norm{\dot{y}_{H_1}}^2 \nonumber\\&~~~-(1-a^*)y_{H_1}^Tu_x+\dot{\tilde{u}}^T\dot{y},\label{ndv11}
	\end{align}
	where $\lambda_{min}({\mathbb{H}})+a^*\lambda_{2}(L \otimes I_{l})>0$ since $\mathbb{H}$ is positive definite. With $a^*>1$, the $L_2$-gain of the interconnected network dynamics, from the port input $\dot{\tilde{u}}$ to the port output $\dot{y}$ can be calculated by setting $u_x$ to $0$. 
	From inequality \eqref{ndv11}, the map from the input $\dot{\tilde{u}}$ to the output $\dot{y}$ remains finite $L_2$-gain stable around the saddle point $x^*,\alpha^*,\theta^*$, when the corresponding $L_2$-gain, satisfies
	\begin{align}
	\gamma\leq \frac{1}{\big(\lambda_{min}({\mathbb{H}})+a^*\lambda_{2}(L \otimes I_{l})\big)}. \label{l2bound}    
	\end{align}
\end{proof}
The inequality \eqref{l2bound}, clearly indicates that the $L_2$ gain corresponding to the adaptive distributed primal-dual dynamics reduces in margin as compared to the $L_2$ gain corresponding to the distributed primal-dual dynamics (without adaptive synchronization). Using \eqref{lambdamax}, one can obtain the following expression for the $L_2$-gain in the worst case:
\begin{align}
\underline{\gamma} = \frac{1}{\big(\lambda_{min}({\mathbb{H}})+a^*\frac{\lambda_n(L \otimes I_l)}{\lambda_n (L_0 \otimes I_l)}\lambda_2(L_0 \otimes I_l)\big)}. \label{worst-gain}
\end{align}
Comparing \eqref{l2bound} and \eqref{worst-gain}, it can be found out that the $L_2$-gain for the ADPDD has a reduced margin than that of the DPDD. Thus the algorithm calls for trade-off between the robustness and the accelerated convergence of the proposed dynamics. While the adaptive synchronization improves the rate of convergence of the primal-dual dynamics, it simultaneously degrades the robustness of the proposed algorithm wherein the worst-case $L_2$-gain is quantified by $\underline{\gamma} (<\gamma)$ in \eqref{worst-gain}.
\section{Applications and Numerical Examples}\label{sims}
This section discusses the application of the proposed dynamics to the distributed optimization problems concerning least squares\cite{nedic2018distributed,mead2010least} and support vector machines\cite{cortes1995support}. These problems are solved online over a network of wireless sensors or computing devices, in such premises the rate of convergence is a vital factor. In the following, the proposed dynamics \eqref{h1d}-\eqref{h3} is employed to solve the distributed least squares\cite{nabavi2015distributed} and distributed support vector machines\cite{forero2010consensus,stolpe2016distributed} problems.
\subsection{Distributed Least Squares}
Distributed least squares problems have been widely studied over recent years\cite{sayed2006distributed,mateos2009distributed,wang2019distributed}. These techniques have found applications in parameter estimation over wireless sensor networks \cite{kar2012distributed}, estimation of electro-mechanical oscillation modes of large power system networks \cite{nabavi2015distributed,zhao2017supervisory} etc. Each agent in the network is given a task to simultaneously and iteratively compute the same least squares solution to the linear equation $Ax=b$ where $A\in \mathbb{R}^{r_1 \times r_2}$  with $r_1>r_2$ and $b \in \mathbb{R}^{r_1 \times 1}$. 

Formally, the least squares problem is defined as given below\cite{bjorck1996numerical}:
\begin{align}
\min_x \frac{1}{2}\norm{Ax-b}^2.\label{lsq}
\end{align}
\subsubsection{Data partitioning}
It is assumed that each agent in the network adheres to $n_r = r_1/n$ consecutive rows of $A$ and $b$. For the sake of simplicity, equal partitioning of the rows of $A$ is considered. However, the proposed approach would hold even if the partitioning is uneven. 
\begin{align}
A=\begin{bmatrix}
A_1\\
A_2\\
\vdots\\
A_n
\end{bmatrix},	b=\begin{bmatrix}
b_1\\
b_2\\
\vdots\\
b_n
\end{bmatrix}.
\end{align} where $A_i\in\mathbb{R}^{n_r \times {l}}$ and $A_i\in\mathbb{R}^{n_r \times 1}$.
\subsubsection{Distributed formulation of least squares problem}
The consensus-based distributed optimization formulation of \eqref{lsq} would require the local estimates $x_1,x_2,\ldots,x_n$ to reach consensus on the global optimizer $x^*$. With data partitioning as defined above, the distributed version of the least squares problem \eqref{lsq}\cite{nabavi2015distributed} is defined as
\begin{align}
&\min_x\sum_{i=1}^{n}\frac{1}{2}\norm{A_ix-b_i}^2\nonumber\\
&\mathrm{subject~to}~ x_i = x_j,\forall j\in \mathcal{N}_i. \label{dlsq}
\end{align}
\subsubsection{Solution to the distributed least squares problem \eqref{dlsq} using ADPDD}
The Lagrangian problem corresponding to \eqref{dlsq} can be defined as
\begin{align}
\mathcal{L}(x,\alpha)=\sum_{i=1}^{n}\norm{A_ix-b_i}^2+\alpha^TL\otimes I_l x+x^T L\otimes I_lx. \label{lagdlsq}
\end{align}
Similarly to \eqref{pddy}, the proposed dynamics can be derived from \eqref{lagdlsq} as given below:
\begin{align} 
H_1&:\begin{cases}
\dot{x}&=-A^T(Ax-b)-(L \otimes I_l)x+u_{H_1},\\
\dot{a}_{iq}&=d_{iq}(e^T_{iq}e_{iq}+\dot{e}^T_{iq}\dot{e}_{iq}),\forall i \in \mathcal{N},\forall q \in \mathcal{N}_i\\
y_{H_1}&=x.
\end{cases}\label{p1lsq}\\
H_2&:\begin{cases}
\dot{\alpha}&=u_{H_2},\\
y_{H_2}&=\alpha.
\end{cases}\label{p2lsq}
\end{align} where $u_{H_1}=-(L \otimes I_l) y_{H_2}$ and $u_{H_2}=(L \otimes I_l)y_{H_1}$.
\subsubsection{Simulations}
The simulation parameters are randomly generated matrix $A\in \mathbb{R}^{100 \times 80}$ and vector $b\in \mathbb{R}^{100 \times 1}$. The network with a cyclic graph topology is assumed to comprise of $\mathrm{4}$ agents wherein each agent holds $A_i\in \mathbb{R}^{25\times 80}$ component of $A$ as well as the respective $b_i$. Each agent in the network computes $x\in \mathbb{R}^{80}$ local estimates and reaches consensus over the global solution $x^*$ as shown in the Fig. \ref{f21}. The simulations were carried out using $d_{iq}=0.1$, the rate of convergence of \eqref{p1lsq} is compared with that of the non-adaptive version of the distributed primal-dual dynamics employed to solve the problem \eqref{dlsq}. The rate of convergence is significantly improved as shown in the Fig. \ref{f22}. The global solution to \eqref{dlsq} is also compared with the solution of the least square solver $\mathrm{lsqlin}$ in $\mathrm{MATLAB}$. The global optimizer $x^*_1=x^*_2=x^*_3=x^*_4$ obtained using the proposed algorithm coincides with the optimal solution $x^*$ obtained using $\mathrm{lsqlin}$ as shown in the Fig. \ref{f19}.
\begin{figure}[!t]
	\centering
	\includegraphics[width=6in]{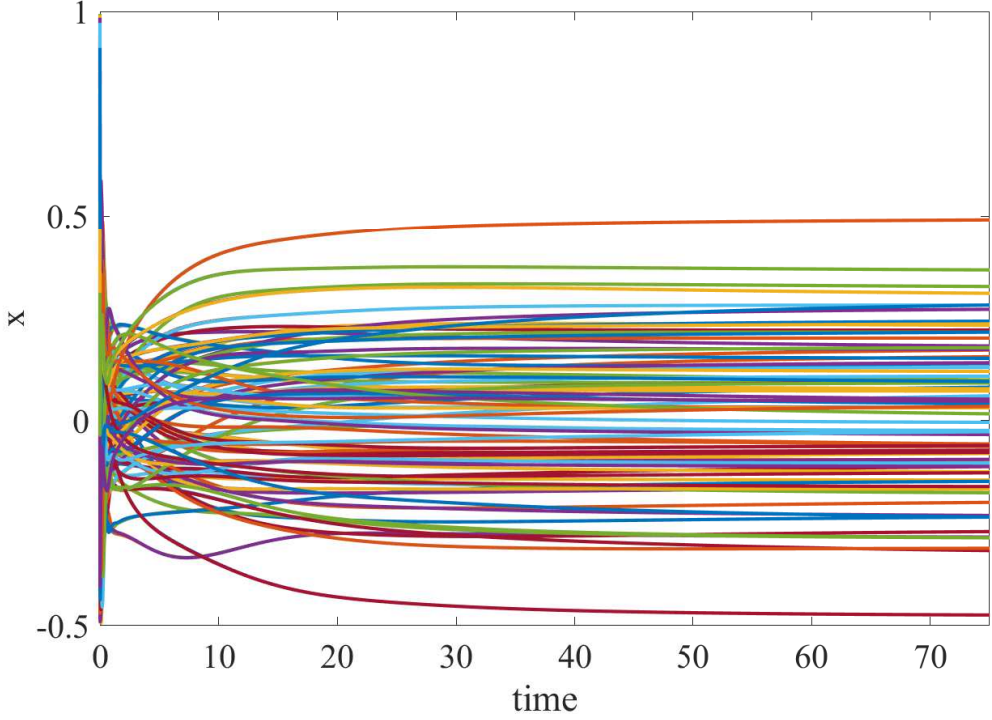}
	\caption{Convergence of \eqref{p1lsq} to the global solution $x^*$.}
	\label{f21}
\end{figure}
\begin{figure}[!t]
	\centering
	\includegraphics[width=6in]{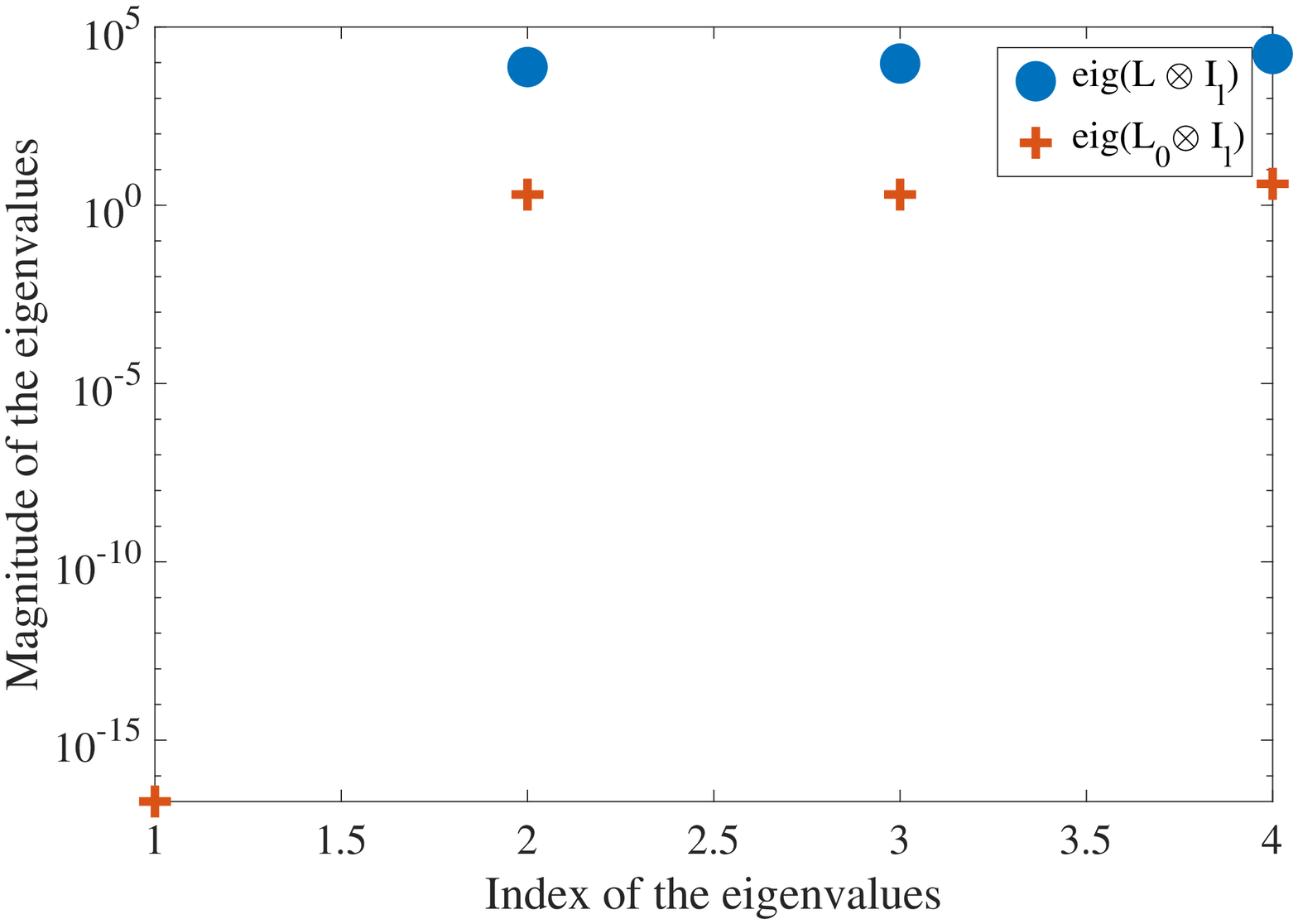}
	\caption{Eigenvalue comparison of $L\otimes I_l$ and $L_0\otimes I_l$ for the problem \eqref{p1lsq}.}
	\label{f22}
\end{figure}
\begin{figure}[!t]
	\centering
	\includegraphics[width=6in]{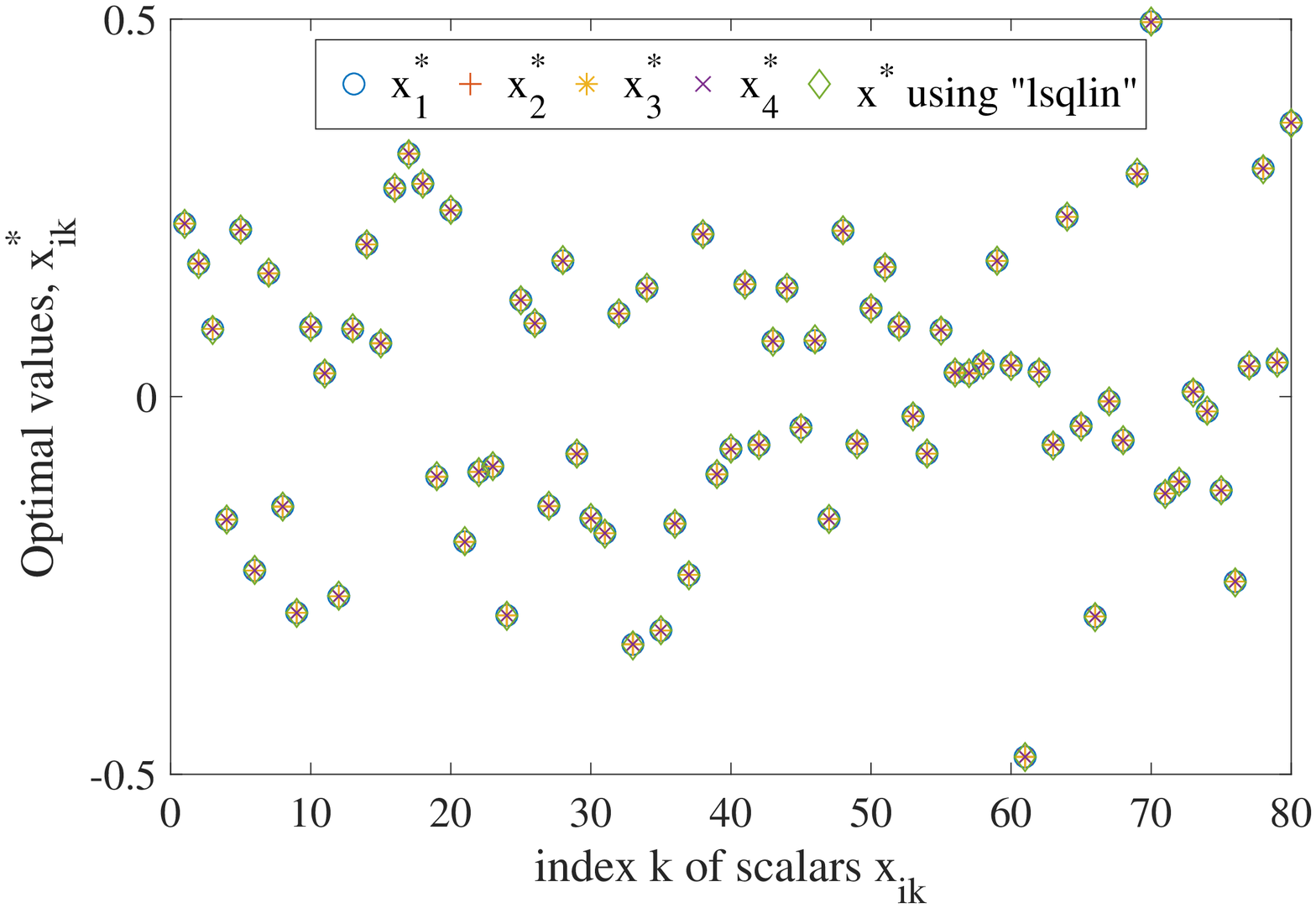}
	\caption{Comparison of the global optimizers of \eqref{dlsq} with the optimal solution of the $\mathrm{lsqlin}$ solver.}
	\label{f19}
\end{figure}
\subsection{Quadratic-inequality Constrained Distributed Least Squares}
A box-constrained linear least squares problem is the one in which the upper and lower bounds on the estimated values are incorporated to handle limitations of the physical system. These methods are studied with applications to GPS positioning \cite{zhu2005bayesian}, geodesic applications \cite{peng2006aggregate,fang2015weighted,wang2016robust} etc. The box-constrained least squares problem is generally defined as follows:
\begin{align}
\min_x \norm{Ax-b}^2, \mathrm{subject~to}~x_l\leq x\leq x_u,\label{blsq}
\end{align} where $x_l$ and $x_u$ are the upper and lower bounds of the variable $x$. It is known that a quadratic constraint formulation of the box constrained least square problem is an efficient approach to obtain the optimal solution of \eqref{blsq} \cite{mead2010least}. The quadratic-constrained equivalent formulation of the box-constrained least square problem \eqref{qlsq} is given as:
\begin{align}
\min_x\norm{Ax-b}^2, \mathrm{subject~to}~(x_i-\bar{x}_i)^2\leq \rho^2_i,\forall^n_{i=1}. \label{qlsq}
\end{align}
where $\bar{x}_i$ is the midpoint of the interval $[x_l,x_u]$. It is computed as $\bar{x}_i=(x_l+x_u)/2$ with $\rho_i = (x_u-x_l)/2$. 

A distributed framework for the quadratic-constrained least squares problem \eqref{qlsq} can be obtained as:
\begin{align}
&\min_x\sum_{i=1}^{n}\norm{A_ix-b_i}^2,\nonumber \\
&\mathrm{subject~to}~(x_{ik}-\bar{x}_{ik})^2\leq \rho^2_{ik},\forall^l_{k=1},\forall^n_{i=1} \nonumber\\
&~~~~~~~~~~~~~~~ x_i = x_j,\forall j\in \mathcal{N}_i.\label{dqlsq}
\end{align}
The ADPDD formulation of the problem \eqref{dqlsq} is similar to that of the proposed dynamics \eqref{h1d}-\eqref{h3}. Hence, it is omitted to avoid repetition of the equations. 
\subsubsection{Simulations}
For the sake of simplicity and readability of the simulation results, a small problem of the form \eqref{dqlsq} is taken as a proof of concept with the parameters $A\in \mathbb{R}^{20 \times 4}$ and $b\in \mathbb{R}^{20 \times 1}$. A network with a cyclic graph topology containing $\mathrm{4}$ agents is considered wherein each agent holds on to $A_i\in \mathbb{R}^{5 \times 4}$ component of the matrix $A$. All agents iteratively reach the global consensus of the optimizer value $x^*$ with $d_{iq}=2$, as shown in the Fig. \ref{f23}. It can be observed that the trajectories $x_1,x_2,x_3$, and $x_4$ synchronize to respective common trajectories at around $t \approxeq 0.03~\mathrm{seconds}$. The result is also compared with the solution of $\mathrm{lsqlin}$ and it can be seen from the Fig. \ref{f25} that the global optimizer of \eqref{dqlsq} coincides with the solution obtained using $\mathrm{lsqlin}$. The accelerated convergence of the proposed algorithm employed to solve \eqref{dqlsq} is evident from the Fig. \ref{f24}.

\begin{remark}
	A strong synchronization between the trajectories of the agents imply guaranteed convergence to the global optimizer under sparse communication events. It is also indicative of the fact that the communication between the agents need not be periodic. The proposed algorithm can be augmented with the event-triggered control framework. 
\end{remark}

\begin{figure}[!t]
	\centering
	\includegraphics[width=6in]{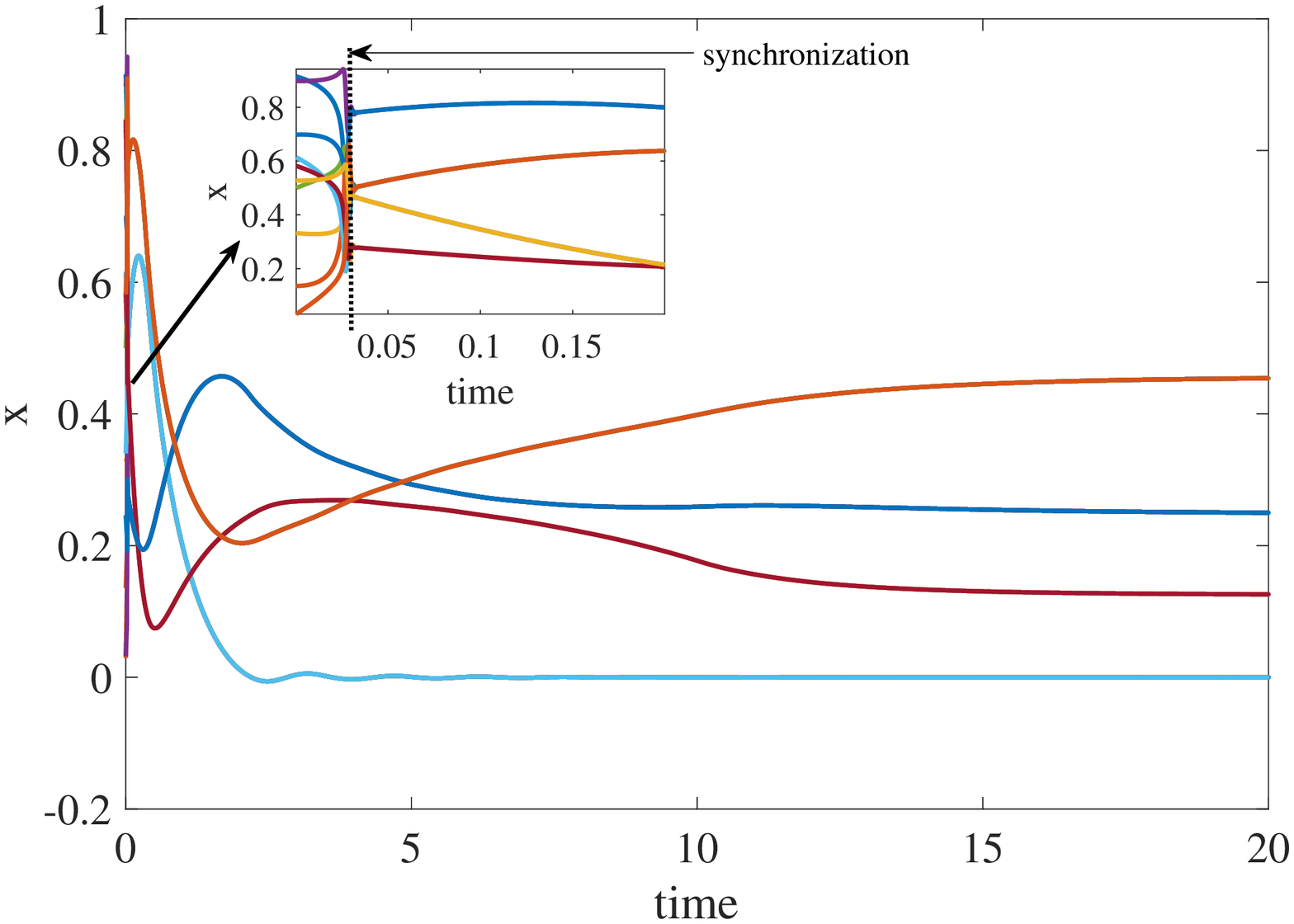}
	\caption{Convergence of the solutions of \eqref{dqlsq} to the global optimizer $x^*$.}
	\label{f23}
\end{figure}
\begin{figure}[!t]
	\centering
	\includegraphics[width=6in]{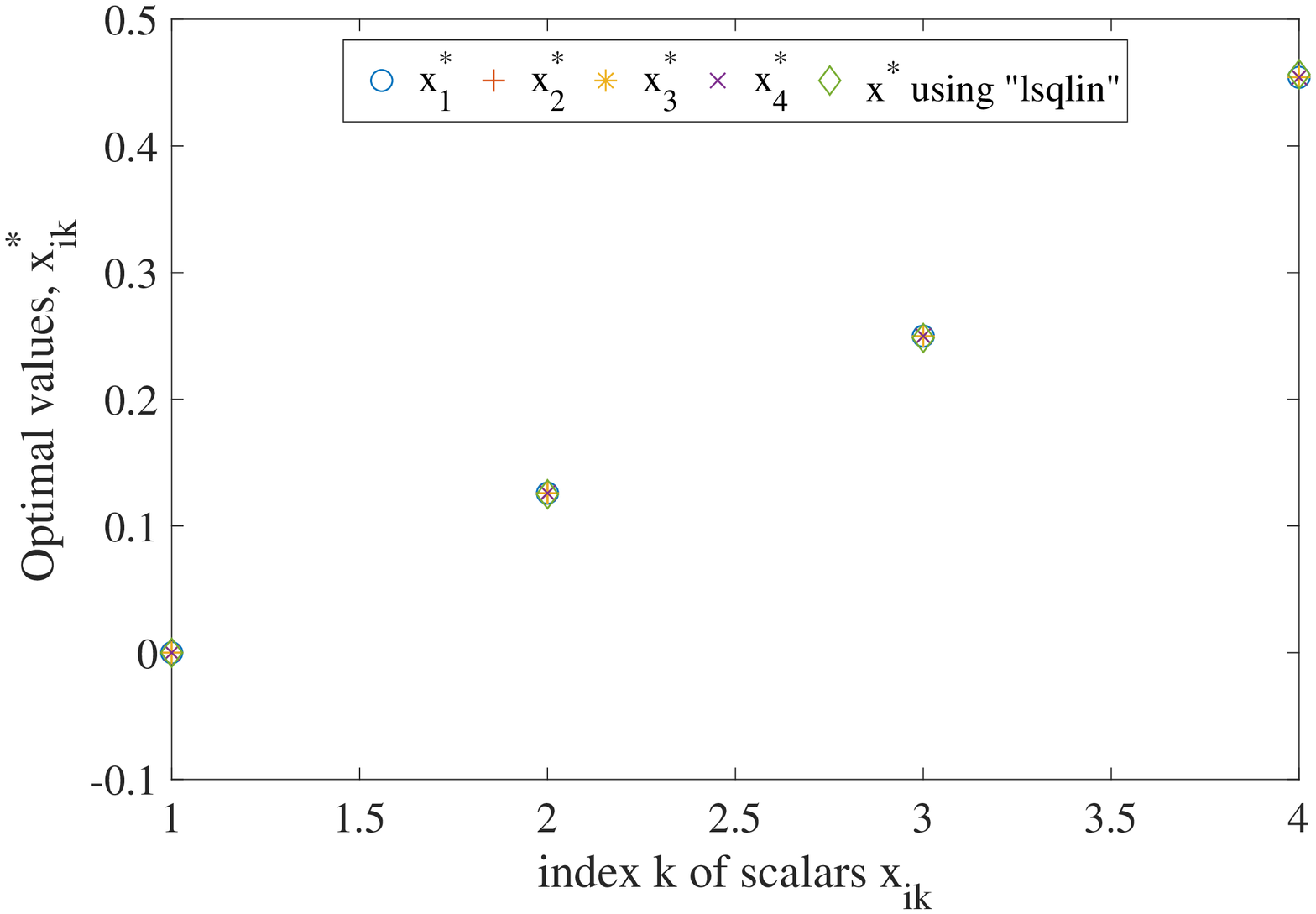}
	\caption{Comparison of the global optimizers of \eqref{dqlsq} with the optimal solution of the $\mathrm{lsqlin}$ solver.}
	\label{f25}
\end{figure}
\begin{figure}[!t]
	\centering
	\includegraphics[width=6in]{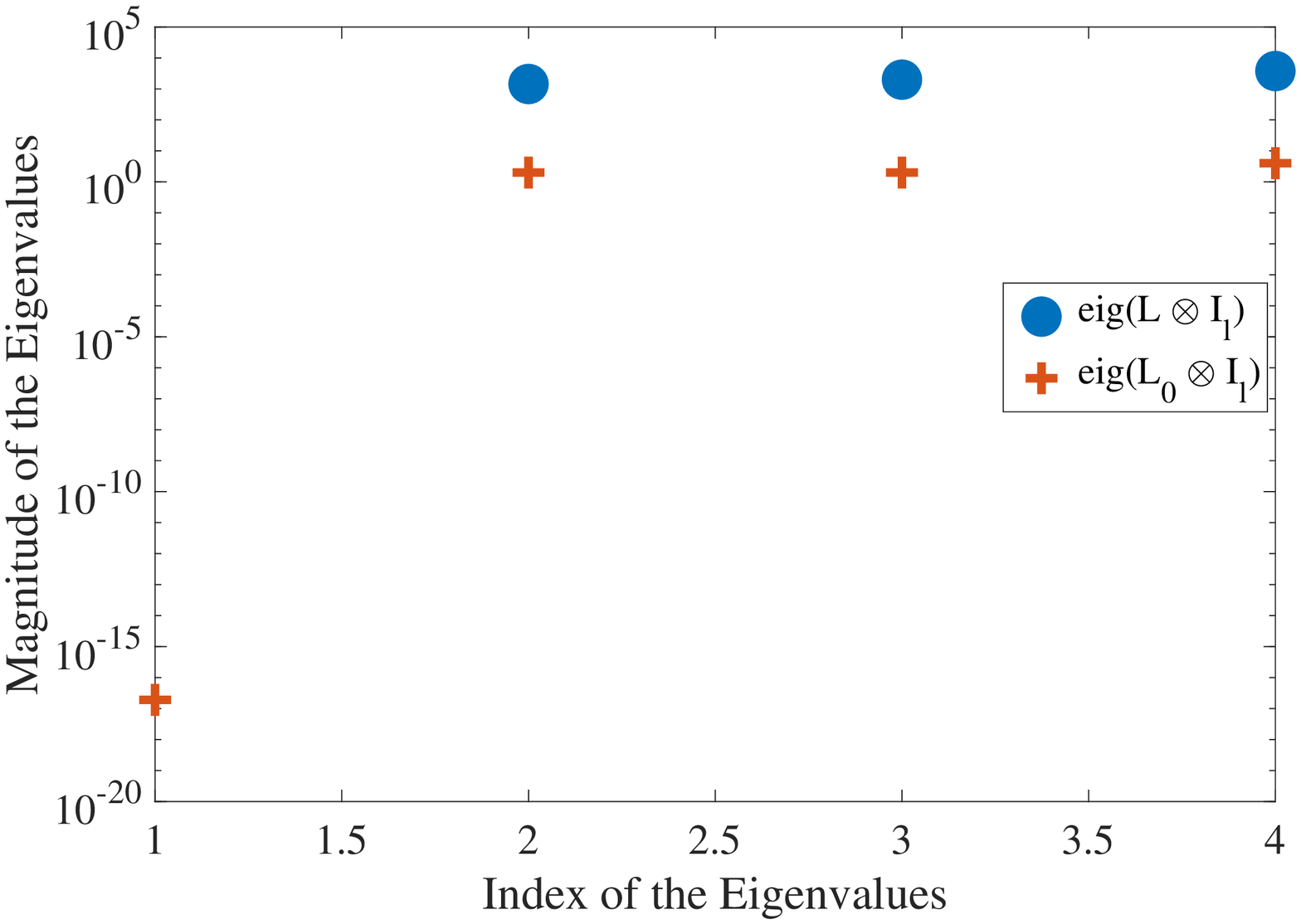}
	\caption{Eigenvalue comparison of $L\otimes I_l$ and $L_0\otimes I_l$ for the problem \eqref{p1lsq}.}
	\label{f24}
\end{figure}
\subsection{Distributed Support Vector Machines}
Support vector machines (SVMs) are supervised learning based paradigms in the machine learning domain, used for classification and regression analysis on raw data, (see \cite{cortes1995support}). For applications with a huge amount of data, there are often limitations with respect to bandwidth requirement, data storage and processing capability of the computing machine, response time, etc. As it turns out, a single computing machine is inefficient in dealing with the SVM algorithm with large datasets. Distributed versions of support vector machines have been proposed as an alternative method to overcome these limitations, as discussed in \cite{forero2010consensus,stolpe2016distributed}. With the aim of enabling accelerated convergence to the optimal solution, the distributed SVM problem is formulated in terms of the adaptive primal-dual dynamics. However, due to the complexity involved with simulations of large-scale SVM problems, the present work only considers the mathematical formulation and does not provide the simulation results for the same.

A problem formulation of the support vector machines for the case of non-separable data is given below:
\begin{align}
\begin{aligned}
&\min~\frac{1}{2}\norm{w}^2+pC\sum_{j=1}^{m}\xi_{j}\\
&\mathrm{s.t.}~ y_j(w^Tx_j+b)\geq 1-\xi_{j},\forall^m_{j=1},    
\end{aligned}\label{cent}
\end{align}
where $\frac{1}{\norm{w}}$ is the margin that separates positive and negative observations, $(x_j,y_j)\in S$ is a paired observation sample, and $w,b$ are weight and bias variables, respectively. $1-\xi_{j}-y_{j}(w^Tx_{j}+b)$ is called as a hinge loss function. C is used to trade off the sum over all slack variables $\xi$ against the size of the margin. $p>0$ is the scaling factor.
\subsubsection{Data Partitioning}
It is assumed that the set of observations $S$ is horizontally partitioned and distributed among computing nodes in $\mathcal{G}(\mathcal{N},\mathcal{E})$ \cite{stolpe2016distributed}, where now $\mathcal{N}= \{1,\ldots,n\}$ represents the computing nodes and the set of edges $\mathcal{E}$ describes communication links between them. Assuming that the graph is connected and enabling only one-hop neighborhood communication, each node $i$ communicates with its neighbors belonging to $\mathcal{N}_i$. Each node $i\in \mathcal{N}$ stores a sample set of labeled observations, denoted by $S_i=\{(x_{i1},y_{i1}),\ldots,(x_{im_i},y_{im_i})\}$.
Note that:
\begin{enumerate}
	\item $S_i$ is a set of labeled observations allocated to $i^{th}$ computing node, $S_i \in S$, where $S$ is a superset of the labeled observations.
	\item $x_{i} \in \mathbb{R}^{m_i\times 1}$.
	\item $y_{ij} \in \{-1,+1\}$ is a class label.
\end{enumerate}
In what follows, an adaptive primal-dual dynamics based formulation of distributed support vector machines is provided. 
\subsubsection{ADPDD formulation of Distributed Support Vector Machines}
A distributed version of the support vector machines problem \eqref{cent} is formulated as given below (see, \cite{forero2010consensus}):
\begin{align}
\begin{aligned}
\min~&\frac{1}{2}\sum_{i=1}^{n}\norm{w_i}^2+pC\sum_{i=1}^{n}\sum_{j=1}^{m_i}\xi_{ij}\\
\mathrm{s.t.}&~y_{ij}(w_ix_{ij}+b_i)\geq 1-\xi_{ij},\xi_{ij}\geq 0,\forall i\in \mathcal{N},\forall^{m_i}_{j=1},\\
&~w_i=w_q,b_i=b_q,~\forall i\in \mathcal{N},q\in\mathcal{N}_i.
\end{aligned}\label{dist}
\end{align}
The objective function in \eqref{dist} is a differentiable $(C^2)$ and strongly convex in $w$. The decision (primal) variables are $w,b \in \mathbb{R}^m$, where $w_i=w_q,b_i=b_q$ are the consensus constraints with $q$ as a neighbor of $i$ if and only if $q \in \mathcal{N}_i$. 
Let $h_{ij}(\xi_{ij},w_i,b_i)=1-\xi_{ij}-y_{ij}(w_ix_{ij}+b_i)$.

The Lagrangian formulation of the problem \eqref{dist} is given by
\begin{align}
\begin{aligned}
\mathcal{L}(w,b,\xi,\theta,\mu,\alpha,\beta)=&\frac{1}{2}\norm{w}^2+pC\sum_{i=1}^{n}\sum_{j=1}^{m_i}\xi_{ij}\\&+\alpha^TLw+\beta^TLb\\&+\sum_{i=1}^{n}\sum_{j=1}^{m_i}\theta_{ij}h_{ij}(\xi_{ij},w_i,b_i)\\&+\sum_{i=1}^{n}\sum_{j=1}^{m_i}\mu_{ij}\xi_{ij}+\frac{1}{2}w^TLw+\frac{1}{2}b^TLb,
\end{aligned}\label{lagr}
\end{align}
where $\theta_{ij},\mu_{ij}$ are the Lagrange multipliers associated with inequality constraints $h_{ij}(\xi_{ij},w_i,b_i)$ and $\xi_{ij}\geq 0$, of $i^{th}$ computing node, and $\alpha_i,\beta_i$ are the Lagrange multipliers associated with coupling constraints of $i^{th}$ and $q^{th}, \forall q \in \mathcal{N}_i$ nodes. $L$ is the Laplacian matrix of the undirected graph $G$. 

Let $z = [w^T, b^T]^T$ (with $z_i=[w_i,b_i]$, $l=2$) then, $e_{iq} = z_i-z_q$. The interconnected network dynamics for the distributed support vector machines problem \eqref{dist} is represented as follows:
\begin{align}
H_1&:\begin{cases}
\dot{w}&=-w- Lw-L\alpha-\zeta,\\
\dot{b}&=- Lb-L\beta-\eta,\\
\dot{a}_{iq}&=d_{iq}(e^T_{iq}e_{iq}+\dot{e}^T_{iq}\dot{e}_{iq}),\forall i \in \mathcal{N},\forall q \in \mathcal{N}_i,\\
u_{H_1}&=-(L \otimes I_l)y_{H_2}-y_{H_3},\\
y_{H_1}&=z.
\end{cases}\label{h1e}
\end{align}
The subsystem $H_2$ contains only consensus-dual variables, with $u_{H_2}$ and $y_{H_2}$ as its input and output respectively, as given below:
\begin{align}
H_2&:\begin{cases}
\dot{\alpha}&=Lw,\\
\dot{\beta}&=Lb,\\
u_{H_2}&=(L \otimes I_l)y_{H_1},\\
y_{H_2}&=[\alpha^T,\beta^T]^T.
\end{cases}\label{h2e}
\end{align}
The subsystem $H_3$ contains the slack variable, and the dual variables corresponding to the inequality constraints, with $u_{H_3}$ and $y_{H_3}$ as its input and output respectively, as given below:
\begin{align}
H_3&:\begin{cases}
\dot{\theta}_{ij}&=[h_{ij}(\xi_{ij},w_i,b_i)]^+_{\theta_{ij}}\forall^{m_i}_{j=1},\forall^n_{i=1},\\
\dot{\mu}_{ij}&=[\xi_{ij}]^+_{\mu_{ij}}\forall^{m_i}_{j=1},\forall^n_{i=1},\\
\dot{\xi}_{ij}&=[-pC-\mu_{ij}+\theta_{ij}]^+_{\xi_{ij}}\forall^{m_i}_{j=1},\forall^n_{i=1},\\
u_{H_3} &= y_{H_1},\\
y_{H_3} &= [\zeta^T,\eta^T]^T,
\end{cases}\label{h13}
\end{align}
where $\zeta, \eta, \mu\in \mathbb{R}^n$, and $\zeta_i = \sum_{j=1}^{m_i}\theta_{ij}(-y_{ij}x_{ij})$ with $\eta_i = \sum_{j=1}^{m_i}\theta_{ij}(-y_{ij})$. 

Thus, the proposed dynamics can be implemented for solving the distributed support vector machines problem \eqref{dist} as shown in \eqref{h1e}-\eqref{h13}. The solution of the underlying dynamics will correspond to the saddle-point solution of \eqref{lagr}, wherein the primal solution is the optimal solution of \eqref{dist}.

In the following, two different formulations of \eqref{diopt} are considered and the results of the proposed dynamics are compared with that of the non-adaptive version of the distributed primal-dual dynamics. 

\subsection{Numerical Example 1} Consider the following distributed optimization problem consisting $\mathrm{3}$ agents having more than one variable and convex inequality constraints.
\begin{align}
\begin{aligned}  
&\min_{x \in \mathbb{R}^6}~\sum_{i=1}^{3}f_i(x_i),\\
&\mathrm{subject~to}~x_i=x_q,~g_i(x_i)\leq 0,~\forall i,q \in \mathcal{N}.
\end{aligned}\label{dex1}
\end{align}
where the objective function associated with each agent is given below
\begin{align}
f_1(x_1)=(x_{11}-x_{12})^2+(x_{11}-1)^2,\\
f_2(x_2)=\frac{1}{3}(x_{21}-x_{22})^2+(x_{21}-3)^2,\\
f_3(x_3)=\frac{1}{3}(x_{31}-x_{32})^2+(x_{31}-6)^2,
\end{align}
with the following local inequality constraints
\begin{align}
g_1(x_1)=6x_{11}^2+3x_{12}^2-11,\\
g_2(x_2)=7x_{11}^2+11x_{12}^2-7,\\
g_3(x_3)=2x_{11}^2+9x_{12}^2-20.
\end{align}
The graph connectivity is assumed to be as follows: $\mathcal{N}_1=1$, $\mathcal{N}_2=2$, and $\mathcal{N}_3=1$. The ADPDD algorithm is employed to solve the problem \eqref{dex1}, and the corresponding trajectories are shown in Fig. \ref{f11}. The primal optimizers are $\mathrm{(1.4099,0.8966)}$. Besides that, in Fig. \ref{f12} the steady state eigenvalues of $L \otimes I_l$ are plotted along with the eigenvalues of $L_0 \otimes I_l$. The eigenvalue $\lambda_2(L \otimes I_l)$ at steady state is equal to $\mathrm{192.8079}$ as compared to the eigenvalue corresponding to a non-adaptive DPDD, $\lambda_2(L_0 \otimes I_l)=1$. From Proposition \ref{lemma12} and Proposition \ref{prope}, it can be seen that the adaptive synchronization has sought to increase the rate of convergence of the ADPDD.
\begin{figure}[!t]
	\centering
	\includegraphics[width=6in]{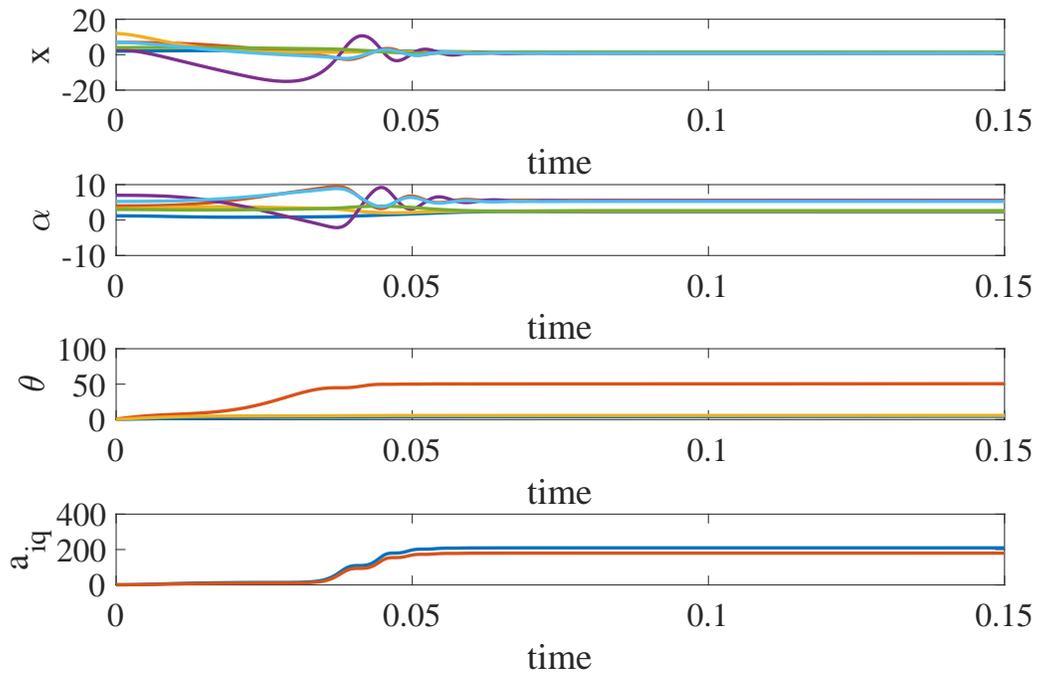}
	\caption{Convergence of the ADPDD (Example 1).}
	\label{f11}
\end{figure}
\begin{figure}[!t]
	\centering
	\includegraphics[width=6in]{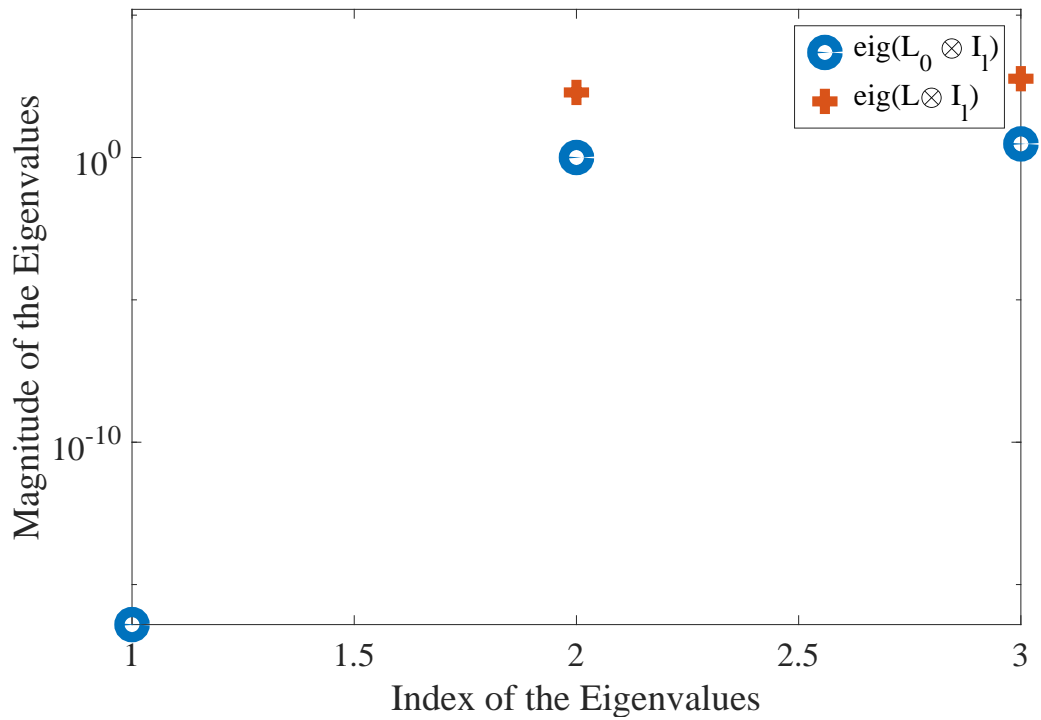}
	\caption{Eigenvalues of $L_0 \otimes I_l$ and $L \otimes I_l$ (Example 1).}
	\label{f12}
\end{figure}
\subsection{Numerical Example 2}\label{ex3} In this subsection, the local inequality constraints associated with each agent are relaxed and the following optimization problem is considered on a random graph with $\mathrm{10}$ agents as shown in Fig. \ref{f13}. Note that the degree of each agent is selected randomly.
\begin{align}
\begin{aligned}  
&\min_{x \in \mathbb{R}^{10}}~\sum_{i=1}^{10}f_i(x_i),\\
&\mathrm{subject~to}~x_i=x_q,~\forall i,q \in \mathcal{N}.
\end{aligned}\label{dex2}
\end{align} with a randomly generated Hessian $\mathbb{H}=\mathrm{diag}([136,439,355,298,302,350,327,398,353,294])$.
The proposed dynamics is employed to solve \eqref{dex2}, first considering $d_{iq}=0.001$ and then $d_{iq}=0.01$. Fig. \ref{f14} and Fig. \ref{f15} correspond to the case of $d_{iq}=0.001$ while Fig. \ref{f114} and Fig. \ref{f115} correspond to the case of $d_{iq}=0.01$. It can be seen that for the latter case the convergence is much faster. This owes to the difference between the resulting eigenvalues, i.e., for the case of $d_{iq}=0.001$, the second smallest eigenvalue $\lambda_2(L \otimes I_l)$ yields to be $\mathrm{10.72}$ whereas the same for the case of $d_{iq}=0.01$ increases to $\mathrm{33310}$. The eigenvalue results for both values of $d_{iq}$ are shown in the Fig. \ref{f15} and the Fig. \ref{f115}.
\begin{figure}[!t]
	\centering
	\includegraphics[width=6in]{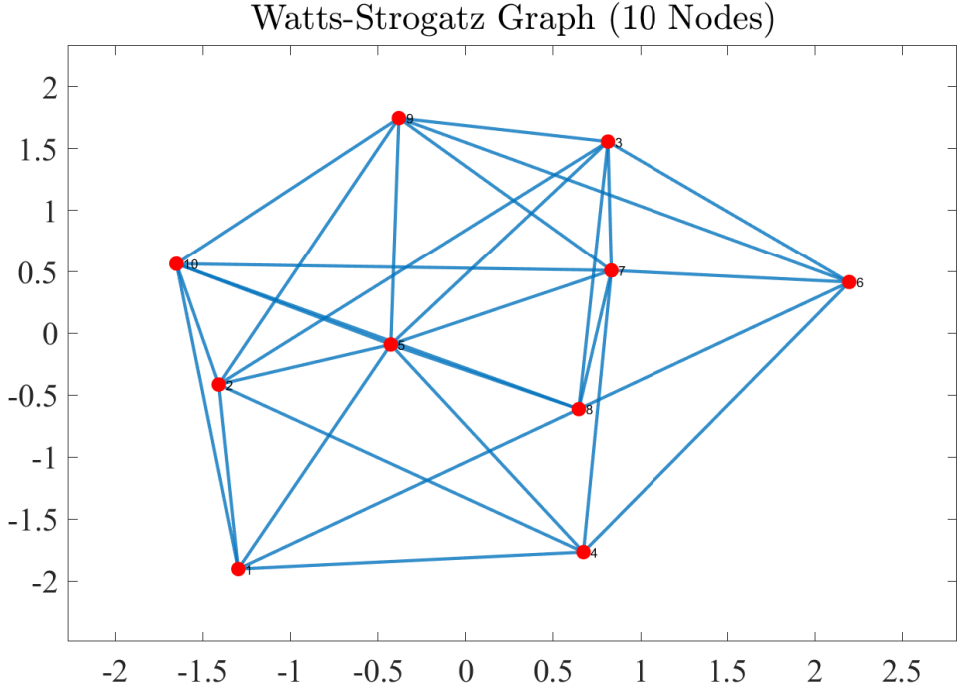}
	\caption{A random graph $\mathcal{G}$ containing $\mathrm{10}$ nodes (Example 2).}
	\label{f13}
\end{figure}
\begin{figure}[!t]
	\centering
	\includegraphics[width=6in]{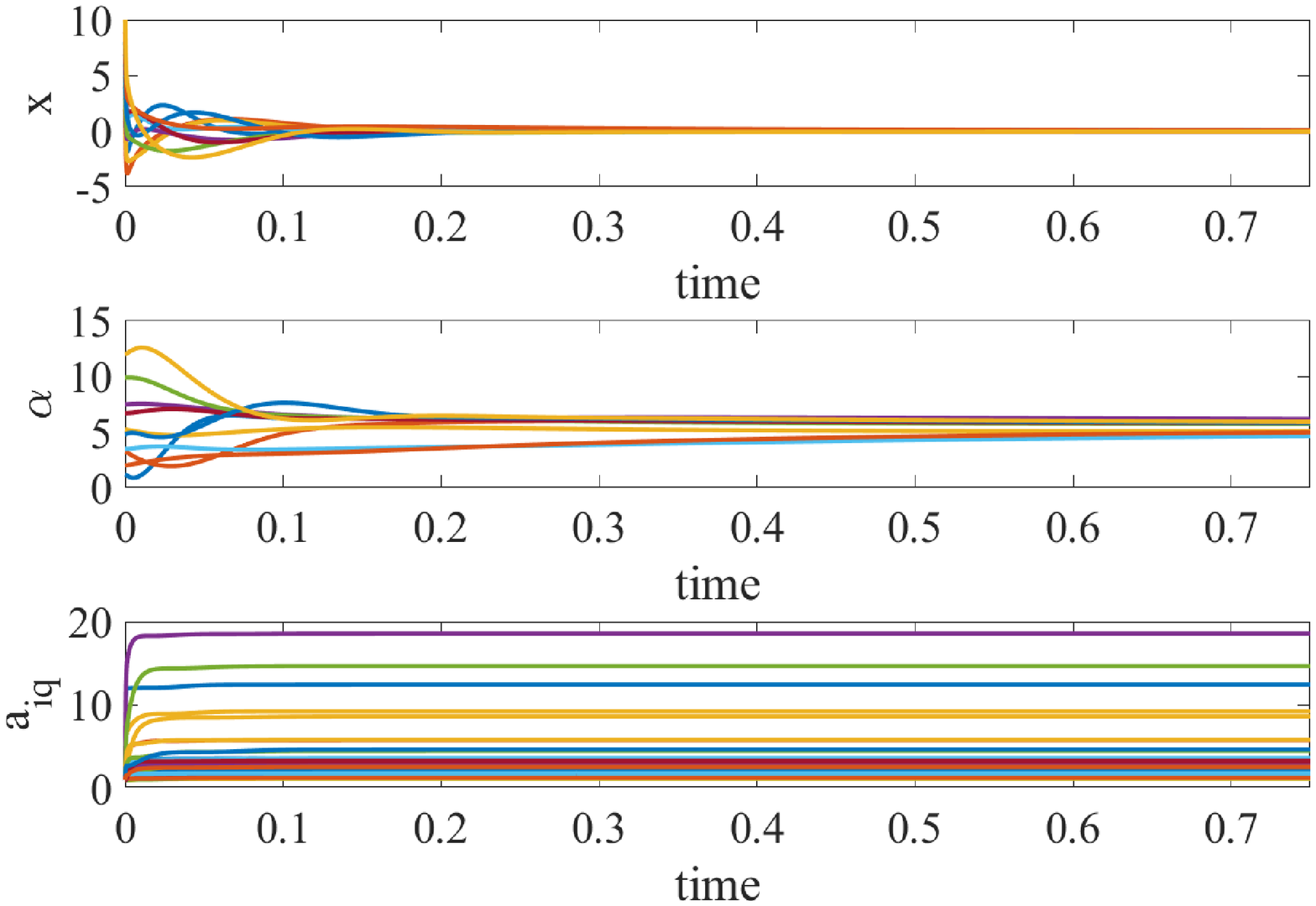}
	\caption{Convergence of the ADPDD ($d_{iq}=0.001$) (Example 2).}
	\label{f14}
\end{figure}
\begin{figure}[!t]
	\centering
	\includegraphics[width=6in]{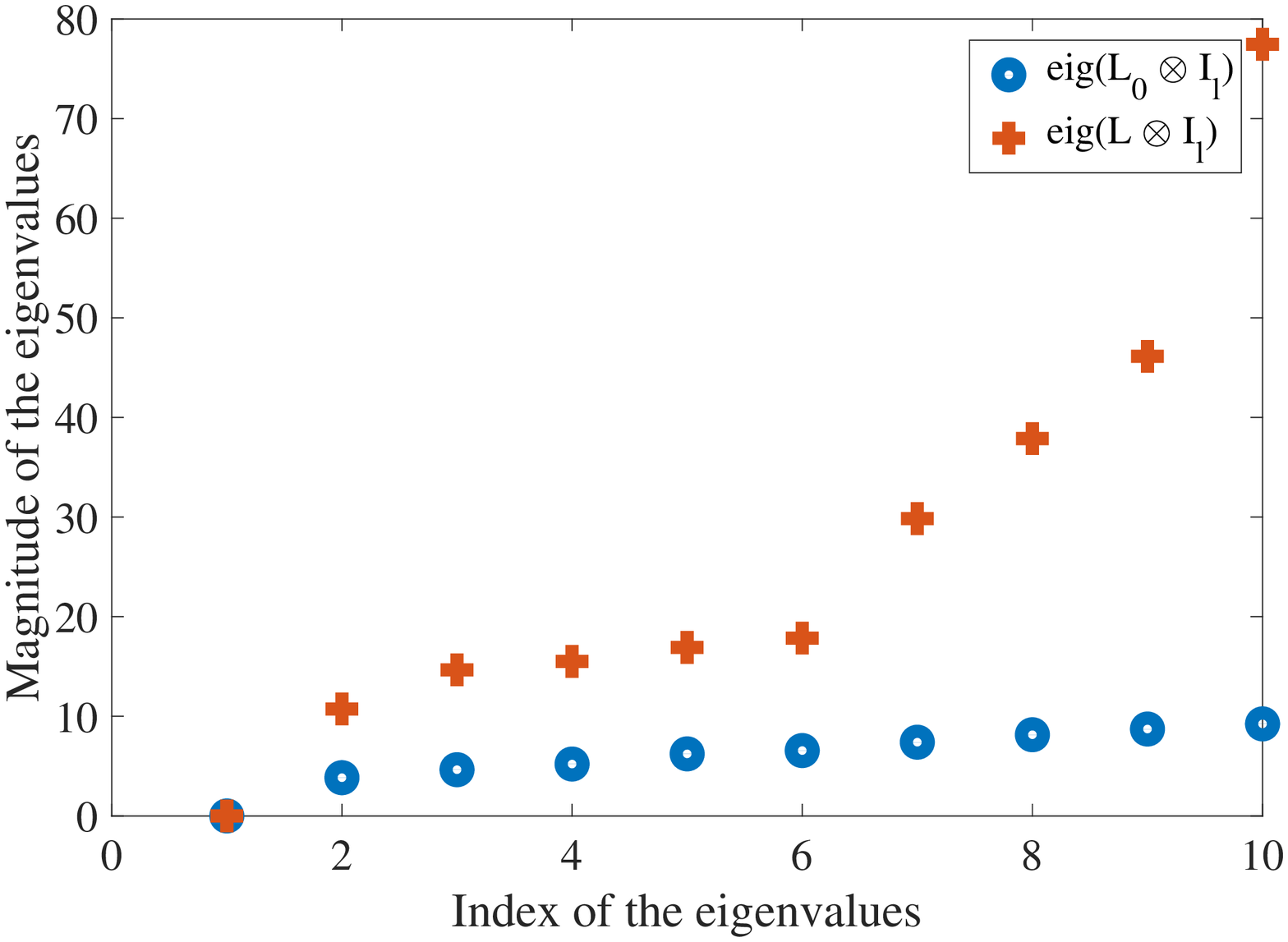}
	\caption{Eigenvalues of $L_0\otimes I_l$ and $L \otimes I_l$ ($d_{iq}=0.001$) (Example 2).}
	\label{f15}
\end{figure}

\begin{figure}[!t]
	\centering
	\includegraphics[width=6in]{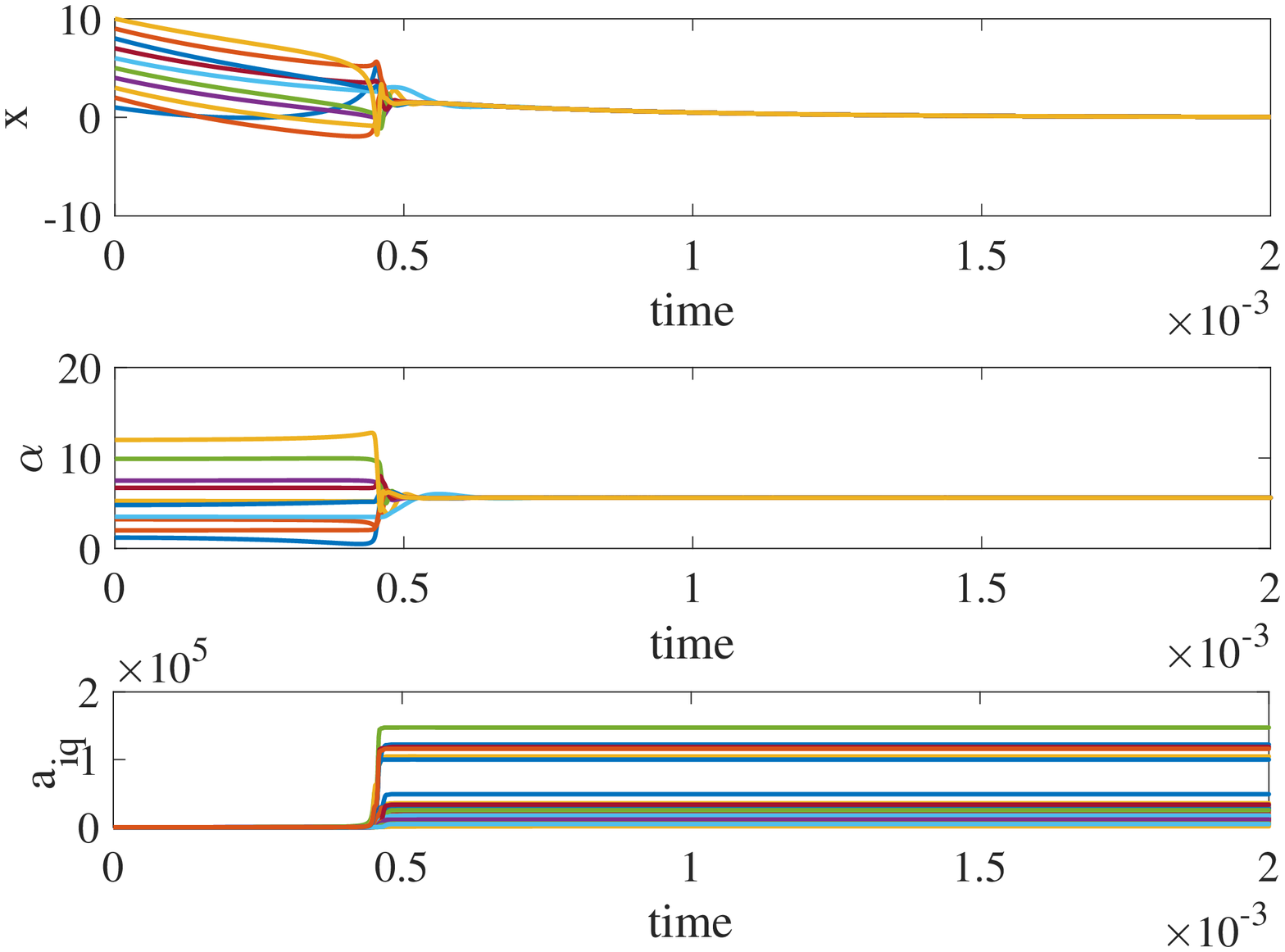}
	\caption{Convergence of the ADPDD ($d_{iq}=0.01$) (Example 2).}
	\label{f114}
\end{figure}
\begin{figure}[!t]
	\centering
	\includegraphics[width=6in]{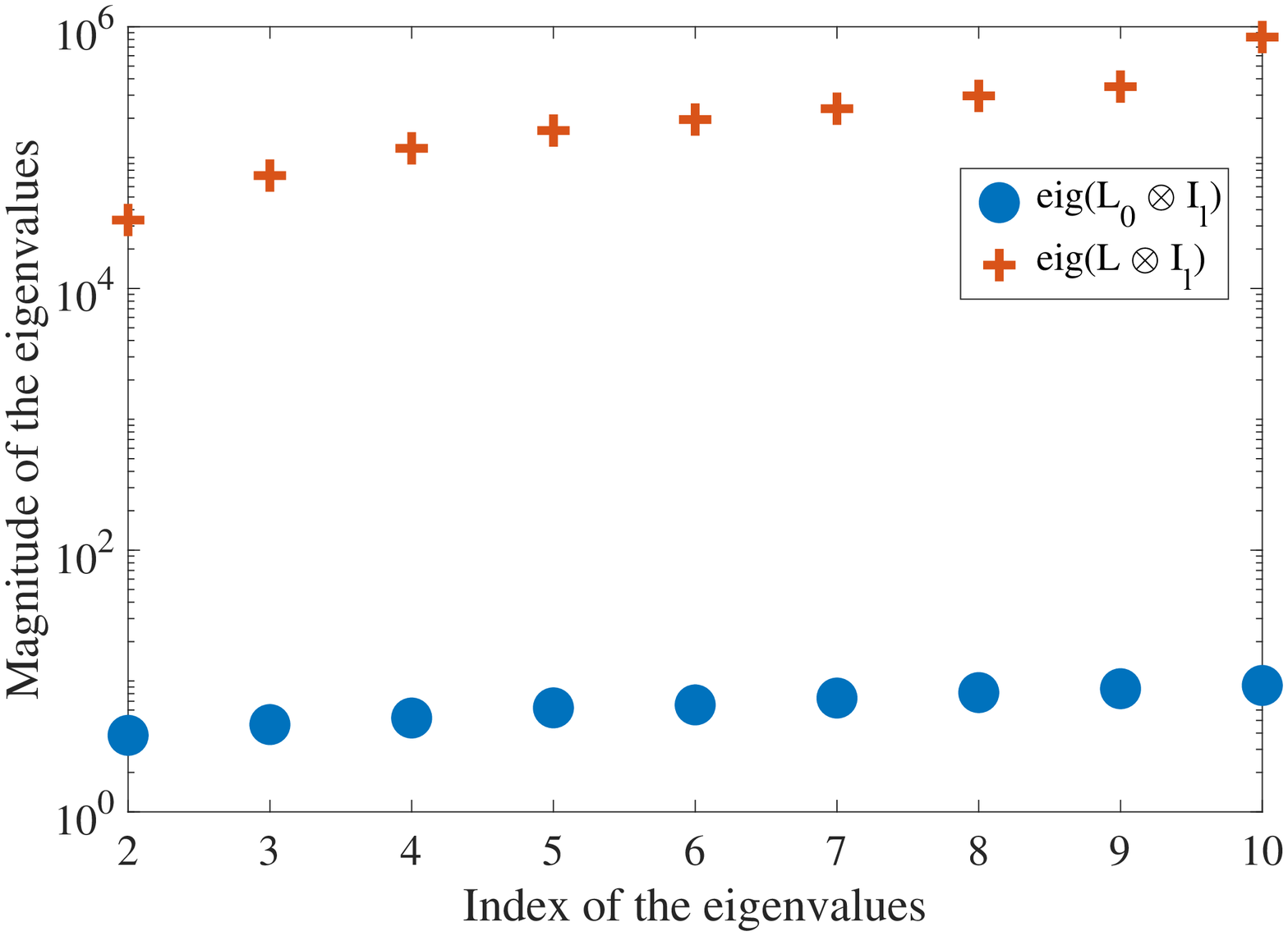}
	\caption{Eigenvalues of $L_0 \otimes I_l$ and $L \otimes I_l$ ($d_{iq}=0.01$) (Example 2).}
	\label{f115}
\end{figure}


\section{Conclusions}\label{concl}
In this paper, an adaptive distributed primal-dual dynamics is proposed to solve inequality and consensus constrained distributed optimization problems. The adaptive synchronization of the primal variables is brought into play by allowing the coupling weights to update according to the difference between the local trajectories (trajectories belonging to the neighboring nodes or agents)  as well as the difference between the rate of change of the local trajectories respectively. It is proved that the proposed dynamics represents a network of feedback-interconnected passive dynamical systems which are asymptotically stable. Further, by allowing a time-scale separation between the adaptive coupling law and primal dynamics, stronger convergence bounds for the primal dynamic are derived, and it is proved that the adaptively coupled primal dynamics converges to the unique primal optimizer.

The performance of the proposed dynamics is quantified in terms of the induced $L_2$-gain from the disturbance input to the output. The effect of adaptive synchronization on the $L_2$-gain is discussed and it is established that the adaptive distributed primal-dual dynamics are comparatively less robust to the exogenous input disturbances than the distributed primal-dual dynamics. On the other hand, the analysis also revealed that in order to achieve accelerated convergence to the saddle-point solution, the proposed algorithm must call for a trade-off between the convergence and the robustness parameters. 

The future scope of the work will be directed towards improving the rate of convergence of the proposed dynamics without compromising its robustness properties. Its applications to large-scale distributed optimization problems such as distributed support vector machines \cite{stolpe2016distributed}, distributed least squares \cite{wang2019distributed} etc will be considered.

\bibliographystyle{unsrt}  
\bibliography{references}  


\end{document}